\newcommand{\be}{\begin{equation}}
\newcommand{\ee}{\end{equation}}
\newcommand{\beq}{\begin{eqnarray}}
\newcommand{\eeq}{\end{eqnarray}}
\def\H{{\mathbb H}}
\newtheorem{prop}{Proposition}[section]
\newtheorem{thm}[prop]{Theorem}
\newtheorem{lemm}[prop]{Lemma}
\newtheorem{rema}[prop]{Remark}
\def\begeq{\begin{equation}}
\def\endeq{\end{equation}}
\def\R{\mathbb R}
\def\s{\sigma}
\def\l{\lambda}
\def\S{{\mathbb S}}
\def \ds{\displaystyle}
\def \vs{\vspace*{0.1cm}}
\def\odot{\setbox0=\hbox{$\bigcirc$}\relax \mathbin {\hbox
to0pt{\raise.5pt\hbox to\wd0{\hfil $\wedge$\hfil}\hss}\box0 }}
\numberwithin{equation} {section}
\begin{document}

\title[Hyperbolic Alexandrov-Fenchel inequalities. I] {Hyperbolic Alexandrov-Fenchel quermassintegral inequalities. I}

\author{Yuxin Ge}
\address{Laboratoire d'Analyse et de Math\'ematiques Appliqu\'ees,
CNRS UMR 8050,
D\'epartement de Math\'ematiques,
Universit\'e Paris Est-Cr\'eteil Val de Marne, \\61 avenue du G\'en\'eral de Gaulle,
94010 Cr\'eteil Cedex, France}
\email{ge@u-pec.fr}
\author{Guofang Wang}
\address{ Albert-Ludwigs-Universit\"at Freiburg,
Mathematisches Institut
Eckerstr. 1
D-79104 Freiburg}
\email{guofang.wang@math.uni-freiburg.de}

\author{Jie Wu}
\address{School of Mathematical Sciences, University of Science and Technology
of China Hefei 230026, P. R. China
\and
 Albert-Ludwigs-Universit\"at Freiburg,
Mathematisches Institut
Eckerstr. 1
D-79104 Freiburg
}
\email{jie.wu@math.uni-freiburg.de}

\thanks{The first named author  is partly supported by ANR  project
ANR-08-BLAN-0335-01. The  second and third named authors are partly supported by SFB/TR71
``Geometric partial differential equations''  of DFG}
\subjclass[2010]{Primary 52A40, 53C65}
\begin{abstract}
In this paper we prove the following geometric inequality in the hyperbolic space $\H^n$ ($n\ge 5)$, which is a hyperbolic  Alexandrov-Fenchel inequality,
\[\begin{array}{rcl}
\ds \int_\Sigma \s_4 d \mu\ge \ds\vs C_{n-1}^4\omega_{n-1}\left\{ \left( \frac{|\Sigma|}{\omega_{n-1}} \right)^\frac 12 +
\left( \frac{|\Sigma|}{\omega_{n-1}} \right)^{\frac 12\frac {n-5}{n-1}} \right\}^2,
\end{array}\]
provided that $\Sigma$ is a horospherical convex hypersurface. Equality holds if and only if $\Sigma$ is a geodesic sphere in $\H^n$.
\end{abstract}

\maketitle

\section{Introduction}
The Alexandrov-Fenchel inequalities for quermassintegrals  of convex domains in $\R^n$, as a generalization of the classical  isoperimetric inequality,
play an important role in classical geometry. For a bounded smooth domain $\Omega \subset \R^n$  with boundary $\partial\Omega =\Sigma$, let $\kappa=(\kappa_1,\kappa_2,\cdots, \kappa_{n-1})$ be the set of the principal curvatures  of $\Sigma$ and $\s_k:\R^{n-1}\to \R$ the $k$-th  elementary symmetric function.
One of equivalent definitions of the quermassintegrals  of $\Omega$ is
\begin{equation}\label{quermass}
V_{n-k}(\Omega)=c_{n,k}\int_\Sigma\s_{k-1}(\kappa), \quad k\ge 1,
\end{equation}
where $c_{n,k}=C_{n-1}^k/C_{n-1}^{k-1}$. In this paper   we denote $C_{n-1}^{k}=\frac{(n-1)!}{k!(n-1-k)!}$. $V_n$ is the volume of $\Omega$ up to a  constant multiple and $V_{n-1}$ the area of $\Sigma$.
The celebrated Alexandrov-Fenchel quermassintegral inequalities state that if $\Omega$ is convex, then for $0\le i<j<n$,
\begin{equation}\label{eq01}
\frac{ V_{n-j}(\Omega)^{\frac 1{n-j}}}{ V_{n-j}(B)^{\frac 1{n-j}}}\ge
\frac{ V_{n-i}(\Omega)^{\frac 1{n-i}}}{ V_{n-i}(B)^{\frac 1{n-i}}}.
\end{equation}
 When $i=0$ and $j=1$,
\eqref{eq01} is the isoperimetric inequality. For $k\geq 1$ in (\ref{quermass}),
inequality (\ref{eq01})
 is equivalent to
\begin{equation}\label{eq01_1}
\int_{\Sigma}\s_k \ge C_{n-1}^k \omega_{n-1} \left(  \frac 1{C_{n-1}^j}\frac 1{\omega_{n-1}}\int_{\Sigma}\s_{j} \right)^{\frac {n-1-k}{n-1-j}}, \quad 0\le j< k\le  n-1,
\end{equation}
where $\omega_{n-1}$ is the area of the standard sphere $\S^{n-1}$.
(In this paper we use a convention that $\s_0=1$.)  Hence inequality \eqref{eq01_1} is also interpreted as a generalization of the isoperimetric inequality.
When $k=1$, this is usually called a Minkowski inequality. When $j=0$, it is
\begin{equation}\label{eq02}
\int_{\Sigma}\s_k \ge C_{n-1}^k \omega_{n-1} \left( \frac {|\Sigma|}{\omega_{n-1}}\right)^{\frac{n-1-k}{n-1}}.
\end{equation}
 The Alexandrov-Fenchel quermassintegral inequalities in $\R^n$ have been intensively
studied in the last several decades. See classical books of Santos\cite{Santos}, Burago-Zalgaller \cite{BuragoZalgaller} and Schneider \cite{Schneider}.
 For the non-convex domains, see the recent interesting work of Guan-Li \cite{GuanLi} and Huisken \cite{Huisken} and also the work of Chang-Wang \cite{ChangWang}.
 Here we just mention a conformal version of  Alexandrov-Fenchel quermassintegral inequalities in $\S^n$ in \cite{GuanWang}, which is curious that there is a very closed relation to
 the hyperbolic Alexandrov-Fenchel quermassintegral inequalities which we want to establish in this paper.

In this paper we are interested in its analogue in the  hyperbolic space $\H^n$. Let $\H^n=\R^+\times \S^{n-1}$ with the hyperbolic metric
\[\bar g=dr^2+\sinh ^2 r g_{\S^{n-1}},\]
where $g_{\S^{n-1}}$ is the standard round metric on the unit sphere $\S^{n-1}$.
The isoperimetric inequality on the  hyperbolic space $\H^n$ was obtained by Schmidt \cite{Schmidt}. See a flow approach
in \cite{Makowski}. When $n=2$, the  hyperbolic isoperimetric inequality
is
\[ L^2\ge 4\pi A+A^2,\]
where $L$ is the length of a  curve $\gamma$ in $\H^2$ and $A$ is the area of the enclosed domain by $\gamma$. Moreover, equality holds if and only if $\gamma$ is a circle.
 There are many attempts to establish  Alexandrov-Fenchel inequalities on the
hyperbolic space $\H^n$. See,  for example, \cite{Schlenker} and \cite{Solanes_Thesis}. In \cite{GS}, Gallego-Solanes proved by using integral geometry
the following interesting inequality for convex domains in $\H^n$, precisely, there holds,
\be \label{gs}
\int_\Sigma \s_k d\mu  > cC_{n-1}^k |\Sigma|,\ee
where $c=1$ if $k>1$ and $c=(n-2)/(n-1)$ if $k=1$ and $|\Sigma|$ is the area of $\Sigma$. Here $d\mu$ is the area element of the induced metric. See also \cite{BM}.
The above inequality \eqref{gs} ($k>1$) is sharp in the sense that the constant $c$ could not be improved.
However, this inequality is far away from being optimal, especially when $|\Sigma|$ is small.

Recently motivated by the study of the quasi-local mass and the Penrose inequality, Brendle-Hung-Wang \cite{BHW}
established the following Minkowski type inequalities  (i.e., $k=1$)
\begin{equation}\label{eq03}
    \int_{\Sigma}\bigg(\lambda'H-(n-1)\langle\bar{\nabla}\lambda',\nu\rangle\bigg)d\mu\geq(n-1)\omega_{n-1}^{\frac 1{n-1}}{|\Sigma|}^{\frac{n-2}{n-1}},
\end{equation}
and  de Lima and Girao \cite{deLG}  proved the following related inequality
\begin{equation}\label{eq04}
    \int_{\Sigma}\lambda'Hd\mu\geq (n-1)\omega_{n-1}\left(\big(\frac {|\Sigma|}{\omega_{n-1}})^{\frac{n-2}{n-1}}+(\frac {|\Sigma|}{\omega_{n-1}}\big)^{\frac{n}{n-1}}\right),
\end{equation}
where  $\lambda'(r)=\cosh r$, if $\Sigma$ is star-shaped and mean convex (i.e. $H>0$). This  total mean curvature integral with the weight  $\lambda' $ appears naturally in
the definition of the Brown-York mass and the Liu-Yau mass
and in the Penrose inequality for asymptotically  hyperbolic graphs \cite{DGS}.
The higher order mean curvature integrals  $\int \lambda'\s_{2k-1}$ appear also in our work \cite{GWW2} on a new mass on asymptotically  hyperbolic graphs. It is
 an interesting open problem if
one can generalize \eqref{eq03} and \eqref{eq04} for general $k$.

In this paper we are interested in the Alexandrov-Fenchel quermassintegral inequalities in $\H^n$ for curvature integrals without the weight $\lambda'$, i.e,
for
\[\int_\Sigma\sigma_k d\mu.\]
Such curvature integrals, like in the Euclidean case, have a close relationship with quermassintegral in $\H^n$. See for example \cite{Santos, GS, Solanes_Thesis}.
We will give more details in our forthcoming paper \cite{GWW_AF_k}.
Such an inequality for $\s_2$ was first proved by  Li-Wei-Xiong in a recent work \cite{LWX}.
\begin{align}\label{eq05}
    \int_{\Sigma}\sigma_2d\mu\geq&\frac{(n-1)(n-2)}2~\left(|\Sigma|+\omega_{n-1}^{\frac{2}{n-1}}{|\Sigma|}^{\frac{n-3}{n-1}}\right),
\end{align}
provided that $\Sigma\subset\H^n$ is a star-shaped and two-convex hypersurface, ie., $\sigma_1\ge 0$ and $\sigma_2\ge 0$.

In this paper we obtain

\begin{thm}
\label{thm_AF2} Let $n\ge 5$. If $\Sigma\subset\H^n$ is horospherical convex, then
\begin{equation}\label{AF2}
\begin{array}{rcl}
\ds \int_\Sigma \s_4 d \mu\ge \ds\vs C_{n-1}^4\omega_{n-1}\left\{ \left( \frac{|\Sigma|}{\omega_{n-1}} \right)^\frac 12 +
\left( \frac{|\Sigma|}{\omega_{n-1}} \right)^{\frac 12\frac {n-5}{n-1}} \right\}^2,
\end{array}
\end{equation}where $\omega_{n-1}$ is the area of the unit sphere $\S^{n-1}$ and $|\Sigma|$ is the area of $\Sigma$.
Equality holds if and only if $\Sigma$ is a geodesic sphere. Moreover, when $n=5$, (\ref{AF2}) holds %provided that $\Sigma\subset\H^n$ is convex.
provided that $\Sigma\subset\H^n$ is a star-shaped and two-convex hypersurface, ie., $\sigma_1\ge 0$ and $\sigma_2\ge 0$.
\end{thm}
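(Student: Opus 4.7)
The plan is to use an inverse curvature flow in $\mathbb{H}^n$ to deform $\Sigma$ through a family $\Sigma_t$ of horospherical convex hypersurfaces, to identify a functional $Q(t)$ combining $\int_{\Sigma_t}\sigma_4 \, d\mu$ and $|\Sigma_t|$ which is monotone along the flow, and to exploit the geodesic-sphere limit to pin down the sharp constant. The exponents on the right-hand side of \eqref{AF2} are forced by the geodesic sphere computation: on a sphere of radius $r$ one has $|\Sigma|/\omega_{n-1} = \sinh^{n-1} r$ and $\sigma_4 = C_{n-1}^4 \coth^4 r$, so both sides of \eqref{AF2} equal $C_{n-1}^4 \, \omega_{n-1} \sinh^{n-5} r \, \cosh^4 r$, which is what any candidate functional must match in the limit.

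Concretely, I would evolve $\Sigma$ by $\partial_t X = (\sigma_3/\sigma_4)\,\nu$ (or a comparable inverse-curvature speed of degree $-1$ in $\kappa$). The first block of work is to prove: (a) long-time existence of the flow; (b) preservation of horospherical convexity, i.e., $\kappa_i \ge 1$ for all $i$ along the flow, via the tensor maximum principle applied to the evolution equation of $h^i_{\;j} - \delta^i_{\;j}$, where the constant negative ambient curvature of $\mathbb{H}^n$ supplies the correct sign in the zeroth-order reaction term; and (c) smooth convergence of the rescaled $\Sigma_t$ to a geodesic sphere as $t \to \infty$, following the Gerhardt--Urbas theory for inverse curvature flows in space forms.

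With the flow under control, I would compute the evolutions of $|\Sigma_t|$ and $\int_{\Sigma_t} \sigma_4 \, d\mu$ from the standard first-variation formulas in $\mathbb{H}^n$ and define
\[
Q(t) := \int_{\Sigma_t}\sigma_4 \, d\mu - C_{n-1}^4 \, \omega_{n-1} \left\{\left(\frac{|\Sigma_t|}{\omega_{n-1}}\right)^{\frac{1}{2}} + \left(\frac{|\Sigma_t|}{\omega_{n-1}}\right)^{\frac{n-5}{2(n-1)}}\right\}^2.
\]
The monotonicity $Q'(t) \le 0$ should then follow from combining these evolution equations with Newton--MacLaurin-type inequalities for $\sigma_k$ sharpened on horospherical convex hypersurfaces: the pointwise condition $\kappa_i \ge 1$ absorbs the unwanted cross terms and produces exactly the two summands on the right-hand side. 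Together with $Q(\infty) = 0$ from the rescaled sphere limit, this yields $Q(0) \ge 0$, which is \eqref{AF2}; equality forces the Newton--MacLaurin chain to be saturated at every point, hence $\Sigma$ to be a geodesic sphere.

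The main obstacle is step (b): horospherical convexity is a strong pointwise condition on the entire Weingarten tensor, and preserving it under a fully nonlinear inverse-curvature speed requires delicate control of the zeroth-order terms in the tensor evolution of $h^i_{\;j} - \delta^i_{\;j}$. For the critical dimension $n=5$, where the second exponent in \eqref{AF2} degenerates to $0$ and the flow argument is less stable, I would instead invoke the Gauss--Bonnet--Chern formula on the $4$-manifold $\Sigma$: expanding the intrinsic Pfaffian via the Gauss equation for $\Sigma \hookrightarrow \mathbb{H}^5$ expresses $\int\sigma_4 \, d\mu$ as a topological constant plus explicit multiples of $\int\sigma_2 \, d\mu$ and $|\Sigma|$, so that \eqref{AF2} reduces to the $\sigma_2$ inequality \eqref{eq05} together with the hyperbolic isoperimetric inequality under the weaker star-shaped, two-convex hypothesis.
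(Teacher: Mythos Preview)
Your flow strategy and the $n=5$ reduction via Gauss--Bonnet--Chern are on target (the latter is exactly what the paper does). But there is a genuine gap in step~(c), and it feeds directly into your endgame.

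For inverse curvature flows in $\mathbb{H}^n$, the rescaled $\Sigma_t$ do \emph{not} converge to a round geodesic sphere. Gerhardt's theorem gives only that the principal curvatures converge exponentially to $1$; the rescaled induced metric limits to $\lambda^2\hat g$, a metric merely conformal to the round one on $\mathbb{S}^{n-1}$ (this ``insufficient convergence'' is the phenomenon analysed by Neves). Hence your claim $Q(\infty)=0$ cannot be read off from a sphere computation, and the argument stalls at the limit. The paper circumvents this by working not with $\int\sigma_4$ but with the Gauss--Bonnet combination $l_2:=\sigma_4-\tfrac{(n-3)(n-4)}{6}\sigma_2+\tfrac{(n-1)(n-2)(n-3)(n-4)}{24}$ and the quotient $|\Sigma|^{-(n-5)/(n-1)}\int l_2$: along the flow $l_2$ is shown to be asymptotically $\sigma_2$ of the Schouten tensor of the conformal limit $\lambda^2\hat g$, so the limit of the quotient is bounded below by the sharp conformal Sobolev inequality of Guan--Wang on $\mathbb{S}^{n-1}$ (horospherical convexity is used once more here, to verify the limit metric lies asymptotically in $\Gamma_1^+$). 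Inequality~\eqref{AF2} is then assembled from this $l_2$ bound together with the separate $\sigma_2$ inequality~\eqref{eq05}; it is not obtained from a single monotone quantity.

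A second, related issue: even for the cleaner $l_2$-quotient, monotonicity does \emph{not} follow from standard Newton--MacLaurin. The variation produces three terms, one of which (the $\sigma_1\sigma_3/\sigma_4$ term) has the wrong sign, and the paper needs a refined pointwise inequality valid precisely on the cone $\{\kappa_i\ge 1\}$ (Proposition~\ref{keyprop.}) to control the sum. Your more complicated difference functional would have to absorb all of this plus the content of~\eqref{eq05} simultaneously; without the $l_2$ decomposition it is not clear that your $Q$ is monotone at all.
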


Theorem \ref{thm_AF2} implies trivially an (equivalent) isoperimetric type result: {\it In the class of horospherical convex hypersurfaces with fixed area, the minimum of $\int\s_4 d\mu$
is achieved by and only by geodesic spheres.}

Unlike in the case of $\R^n$,
 we believe that in the case of $\H^n$ the parity of $k$ in the $k$-scalar curvature $\s_k$ plays a role in the Alexandrov-Fenchel quermassintegral inequalities\footnote{Haizhong Li has
 a similar idea \cite{LiHaizhong}}.
 Namely, for odd $k$ the Alexandrov-Fenchel quermassintegral inequalities should look
  like \eqref{eq03} or \eqref{eq04}, while for even $k$ \eqref{eq05} and \eqref{AF2} are the correct and the best ones. See also Theorem \ref{thm2} below.
 In a forthcoming paper \cite{GWW_AF_k} we will establish the Alexandrov-Fenchel quermassintegral inequalities for general even $k$.

 $\Sigma\subset \H^n$ is
 {\it horospherical convex}  if all principal curvatures are larger than or equal to $1$. The horospherical convexity is a natural geometric concept, which is equivalent to
 the geometric convexity in Riemannian manifolds. Through  our work, we believe that it is (almost) the
 best condition for inequality \eqref{AF2}. See Remark \ref{rk1} and Remark \ref{rk2} below.

The fundamental idea to show the above geometric inequalities is the same: Consider a suitable functional and  a suitable geometric flow  and prove this
functional is non-increasing under the  geometric flow. If the flow converges to the standard sphere, then we have an inequality, with a best constant achieved by the standard sphere.
 The flow we use is the inverse curvature flow
studied by Gerhardt \cite{Gerhardt}
\begin{equation}\label{flow}
\frac{\partial X}{\partial t}=\frac{n-4}4\frac {\s_3}{\s_4}\nu,
\end{equation}
where $\nu$ is the outer normal of $\Sigma$.

The first problem we meet is: what is the  suitable functional for our inequality (\ref{AF2})? By the work of Brendle-Hung-Wang \cite{BHW}, de Lima-Gir\~{a}o \cite{deLG}
and Li-Wei-Xiong \cite{LWX}, one may guess that the following
functional
\begin{equation}\label{func}
Q(\Sigma):=|\Sigma|^{-\frac{n-5}{n-1}} \int_{\Sigma_t}\bigg\{\sigma_4-\frac{(n-3)(n-4)}{6}\sigma_2+\frac{(n-1)(n-2)(n-3)(n-4)}{24}\bigg\},
\end{equation}
could be a good candidate. In fact,
\[l_2:=\bigg(\sigma_4-\frac{(n-3)(n-4)}{6}\sigma_2+\frac{(n-1)(n-2)(n-3)(n-4)}{24}\bigg),\]
is the Gauss-Bonnet curvature $L_2$ up to a  constant multiple. The Gauss-Bonnet curvature $L_2$
is an intrinsic geometric invariant which is a natural generalization of the scalar curvature $R$. For the Gauss-Bonnet curvature $L_2$, see for example \cite{GWW}.
We remark that the functional considered in \cite{LWX} is in fact
the Yamabe quotient for the scalar curvature $R$.
Our functional \eqref{func} is also a Yamabe type quotient for $L_2$.
Due to the complication of the geometry of the hyperbolic space, we obtain a variation formula of $\int l_2$, which has three terms
that we have to deal with. Unlike  the cases of proving inequalities  \eqref{eq03}, \eqref{eq04} and \eqref{eq05}, one can not directly use the Newton-MacLaurin inequalities
(Lemma \ref{lem}) to deal with these three terms. More precisely, among the three terms appeared in (\ref{L2evolve}),
\[ 5\frac{\sigma_5\sigma_3}{\sigma_4}-\frac{4(n-5)}{n-4}\sigma_4,
 \quad \frac{4(n-3)}{n-4}\sigma_2-3\frac{\sigma_3^2}{\sigma_4},\]
 are non-positive, and the term
 \[
\frac{\sigma_1\sigma_3}{\sigma_4}-\frac{4(n-1)}{n-4},\]
is non-negative  in the use of the  Newton-MacLaurin inequalities. For the monotonicity of the functional $Q$ we need to show that
the sum of these 3 term is non-positive.
Hence we have to deal with them together. Since these terms have different scaling of $\kappa$, one could not expect the sum of these three terms is non-positive for
all $\kappa =(\kappa_1,\cdots,\kappa_{n-1})\in \R^{n-1}_+$. Fortunately we show that it does be non-positive, if $\kappa_i\ge 1 $ for all $i$. See Proposition \ref{keyprop.}.
This is one of crucial points of this paper, where the assumption of the horospherical convexity plays a crucial role. By applying the work of Gerhardt on the inverse curvature flow, one can show that
flow (\ref{flow}) preserves the  condition of the horospherical convexity. Therefore, the functional $Q$ defined in (\ref{func}) is non-increasing under  flow (\ref{flow}).
Hence, in order to obtain an inequality we now only need to consider the limit of $Q$ under  flow \eqref{flow}. Now, we meet another problem,
flow \eqref{flow} converges only asymptotically in the sense presented in Proposition 2.2, namely the flow converges asymptotically  to a sphere with a metric $g$ conformal to
the standard round metric on $\S^{n-1}$. We show that along the flow, the induced metric has  (asymptotically) positive Schouten tensor, if the evolving hypersurface is horospherical convex.
For such a metric on $\S^{n-1}$, a generalized Sobolev
inequality was proved by Guan-Wang \cite{GuanWang} in conformal geometry. Here the horospherical convexity plays again an important role.
With this inequality we get a best estimate for $Q$ in Theorem \ref{thm2}.  Therefore Theorem \ref{thm_AF2} follows.
It is interesting to see that the results in conformal geometry on
the sphere are closely related to the hyperbolic Alexandrov-Fenchel inequality in $\H^n$. The connecting bridge  is the inverse curvature flow of Gerhardt \cite{Gerhardt}. See also the work of Ding \cite{Ding}. For the recent related work see \cite{B, BM, KM}.

The rest of this paper is organized as follows. In Section 2 we present some basic facts about the elementary functions $\sigma_k$, the variational formula for $\int\sigma_k$
and recall a generalized Sobolev inequality from \cite{GuanWang}. The preservation of the  horospherical convexity under a inverse curvature flow  considered by Gerhardt,
together with its convergence, is given in this section. In Section 3, we prove the crucial monotonicity of $Q$, analyze its asymptotic behavior under flow \eqref{flow}, and
prove Theorem \ref{thm_AF2}.

\section{Preliminaries}

Let $\s_k$ be the $k$-th elementary symmetry function $\s_k:\R^{n-1}\to \R$ defined by
\[\s_k(\Lambda)=\sum_{i_1<\cdots<i_{k}}\l_{i_1}\cdots\lambda_{i_k}\quad  \hbox{ for } \Lambda=(\l_1, \cdots,\l_{n-1})\in \R^{n-1}.\]
The definition of $\s_k$ can be easily  extended to the set of all symmetric matrix. The Garding cone $\Gamma_k^+$ is defined as
\[\Gamma_k^+=\{\Lambda \in \R^{n-1} \, |\,\s_j(\Lambda)>0, \quad\forall j\le k\}.\]
We collect the basic facts about $\s_k$, which will be directly used in this paper. For other related facts, see a survey of Guan \cite{Guan} or \cite{LWX}.
\begin{lemm} \label{lem} For $\Lambda\in\Gamma_k^+$, we have the following Newton-MacLaurin inequalities
\begin{align}
    &\frac{\sigma_{k-1}\sigma_{k+1}}{\sigma_k^2}\leq \frac{k(n-k-1)}{(k+1)(n-k)}\label{eq21},\\
    &\frac{\sigma_1\sigma_{k-1}}{\sigma_k}\geq \frac{k(n-1)}{n-k}.\label{eq22}
  \end{align}
Moreover,  equality holds in \eqref{eq21} or \eqref{eq22} at $\Lambda$ if and only if $\Lambda=c(1,1,\cdots,1)$.
\end{lemm}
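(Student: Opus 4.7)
The plan is to reduce both inequalities to the classical Newton inequality for a real-rooted polynomial, and then derive the MacLaurin-type inequality \eqref{eq22} by iteration. Normalize by setting
\[
p_j := \frac{\sigma_j(\Lambda)}{C_{n-1}^j}, \qquad 0\le j\le n-1,
\]
with the convention $p_0 = 1$. Using the identities $C_{n-1}^{k-1}/C_{n-1}^k = k/(n-k)$ and $C_{n-1}^{k+1}/C_{n-1}^k = (n-1-k)/(k+1)$, one checks directly that \eqref{eq21} is equivalent to Newton's inequality $p_k^2 \ge p_{k-1}p_{k+1}$, while \eqref{eq22} is equivalent to the MacLaurin-type statement $p_1 p_{k-1} \ge p_k$. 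This normalization is what makes the Rolle-theorem argument below fall out cleanly.

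For Newton's inequality the key observation is that the coordinates $\lambda_1, \ldots, \lambda_{n-1}$ of $\Lambda$ are themselves real, so the polynomial
\[
P(t) = \prod_{i=1}^{n-1}(t+\lambda_i) = \sum_{j=0}^{n-1} C_{n-1}^j\, p_j\, t^{n-1-j}
\]
has $n-1$ real roots, regardless of signs. I would first differentiate $P$ exactly $n-2-k$ times; by iterated Rolle this produces a polynomial of degree $k+1$ with $k+1$ real roots, in which only $p_0,\ldots,p_{k+1}$ appear. Reversing the variable via $t\mapsto 1/t$ and clearing denominators (after a small perturbation if $0$ happens to be a root, handled by continuity of the inequality in $\Lambda$) yields a new degree-$(k+1)$ polynomial, still real-rooted, in which the variables $p_0,\ldots,p_{k+1}$ now appear with increasing powers. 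Differentiating this $k-1$ more times kills $p_0,\ldots,p_{k-2}$ and produces a real-rooted quadratic
\[
A\, p_{k+1}\, t^2 + B\, p_k\, t + C\, p_{k-1}
\]
with explicit positive constants $A, B, C$ depending only on $n$ and $k$. Its discriminant inequality $B^2 \ge 4AC$ is, after a short bookkeeping of binomial coefficients, exactly $p_k^2 \ge p_{k-1}p_{k+1}$, giving \eqref{eq21}. The argument is purely algebraic and does not use $\Lambda \in \Gamma_k^+$; the cone condition enters only so that the ratio form of \eqref{eq21} is meaningful and so that $p_j>0$ for $j\le k$ in the next step.

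Given Newton's inequality, MacLaurin's inequality follows by iteration. On $\Gamma_k^+$ one has $p_j > 0$ for $0 \le j \le k$, so Newton's inequality rearranges to
\[
\frac{p_{j+1}}{p_j} \le \frac{p_j}{p_{j-1}}, \qquad 1 \le j \le k-1,
\]
i.e.\ the ratios $p_j/p_{j-1}$ are non-increasing in $j$. Telescoping from $j = k-1$ down to $j = 0$ gives $p_k/p_{k-1} \le p_1/p_0 = p_1$, which is exactly $p_1 p_{k-1} \ge p_k$, hence \eqref{eq22}. The one mildly technical point I anticipate is the binomial-coefficient bookkeeping which matches the discriminant condition to $p_k^2\ge p_{k-1}p_{k+1}$; everything else is classical. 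For the equality case, equality in Newton forces the reduced quadratic to have a double root, and tracing Rolle's theorem backward through each differentiation shows $P$ must itself have a single real root of multiplicity $n-1$, i.e.\ $\lambda_1 = \cdots = \lambda_{n-1}$. Equality in MacLaurin requires equality in each intermediate Newton step $j = 1, \ldots, k-1$, so one again obtains $\Lambda = c(1,\ldots,1)$.
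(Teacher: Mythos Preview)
Your argument is the classical proof of the Newton--MacLaurin inequalities and is correct: the normalization $p_j=\sigma_j/C_{n-1}^j$ reduces \eqref{eq21} to $p_k^2\ge p_{k-1}p_{k+1}$ and \eqref{eq22} to $p_1p_{k-1}\ge p_k$; the real-rootedness of $P(t)=\prod(t+\lambda_i)$ together with iterated Rolle and the reversal trick yields a real-rooted quadratic whose nonnegative discriminant is exactly Newton's inequality; and on $\Gamma_k^+$ the positivity of $p_1,\ldots,p_k$ lets you telescope the decreasing ratios $p_{j+1}/p_j$ to get MacLaurin. The equality discussion is also essentially right, though the phrase ``tracing Rolle's theorem backward'' hides a small lemma: if a real-rooted polynomial $f$ has $f'$ with a root of multiplicity $m$ at $a$, then $a$ must in fact be a root of $f$ of multiplicity $m+1$ (this follows from the interlacing description of the roots of $f'$). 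With that in hand your backward induction goes through, and the perturbation used for the reversal step is harmless because on $\Gamma_k^+$ the relevant $p_j$ are strictly positive, so one never actually lands on the degenerate case.

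As for comparison with the paper: there is nothing to compare. The paper does \emph{not} prove Lemma~\ref{lem}; it records it as a standard fact and points to Guan's survey \cite{Guan} and \cite{LWX} for background. Your write-up is precisely the textbook derivation one would find in such references (or in Hardy--Littlewood--P\'olya \cite{HLP}), so it is entirely appropriate here.
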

The Newton-MacLaurin inequalities play a very important role in proving geometric inequalities mentioned above. However, we will see that these inequalities are not precise enough to show our
inequality (\ref{AF2}).

 Let $\H^n=\R^+\times \S^{n-1}$ with the hyperbolic metric
\[\bar g=dr^2+\sinh ^2 r g_{\S^{n-1}},\]
where $g_{\S^{n-1}}$ is the standard round metric on the unit sphere $\S^{n-1}$
and $\Sigma \subset \H^n$ a smooth closed hypersurface in $\H^n$ with a unit outward normal $\nu$.
Let $h$ be the second fundamental form of $\Sigma$ and $\kappa=(\kappa_1,\cdots,\kappa_{n-1})$  the set of principal curvatures of $\Sigma$ in $\H^n$
with respect to $\nu$.
The $k$-th mean curvature
of $\Sigma$ is defined by
\[\s_k=\s_k(\kappa).\]

We now  consider  the following curvature evolution equation
\begin{equation}\label{flow_g}
    \frac{d}{dt} X=F\nu,
\end{equation}
where $\Sigma_t=X(t,\cdot)$ is a family of hypersurfaces in $\H^n$,
 $\nu$ is the unit outward normal to $\Sigma_t=X(t,\cdot)$ and $F$ is a speed function which may depend on the position vector $X$ and  principal curvatures of $\Sigma_t$.  One can check that
 along  flow (\ref{flow_g}),
 \begin{align}\label{var}
    \frac d{dt}\int_{\Sigma}\sigma_k d\mu=&(k+1)\int_{\Sigma}F\sigma_{k+1}d\mu+(n-k)\int_{\Sigma}F\sigma_{k-1}d\mu.
\end{align}
For a proof see for instance \cite{Reilly}. Here we use a convention $\sigma_{-1}=0$.
 If one compares  flow (\ref{flow_g}) in $\H^n$ with a similar flow of hypersurfaces in $\R^n$, the last term in (\ref{var}) is an extra term. This extra term comes from  the  sectional curvature $-1$ of $\H^n$ and makes the phenomenon of hypersurfaces in  $\H^n$ much different from the one of hypersurfaces  in $\R^n$.

 As mentioned above, we use exactly the following inverse flow
 \begin{equation}\label{flow1}
    \frac{d}{dt}X=\frac{n-4}{4}\frac{\sigma_3}{\sigma_4}\nu.
\end{equation}

By using the result of Gerhardt \cite{Gerhardt}, we have
\begin{prop}
If the initial hypersurface $\Sigma$ is horospherical convex, then the solution for flow \eqref{flow1} exists for all time $t>0$ and preserves the condition of horospherical convexity. Moreover, the hypersurfaces $\Sigma_t$ become  more and more  umbilical in the sense of
\begin{equation*}
    |h^i_j-\delta^i_j|\leq Ce^{-\frac t{n-1}},\quad t>0,
\end{equation*}
i.e., the principal curvatures are uniformly bounded and converge exponentially fast to one. Here
 $h^i_j=g^{ik}h_{kj}$, where $g$ is the induced metric and $h$ is the second fundamental form.
 \end{prop}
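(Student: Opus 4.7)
The plan is to reduce the proposition to Gerhardt's general theory of inverse curvature flows in hyperbolic space, and then to upgrade the resulting convexity preservation to the stronger horospherical convexity by a tensor maximum principle. Setting $F := \sigma_4/\sigma_3$, flow \eqref{flow1} reads $\partial_t X = \tfrac{n-4}{4 F}\nu$, which, up to an irrelevant constant time-rescaling, is the standard inverse curvature flow driven by $F$. On $\Gamma_4^+$ the function $F$ is smooth, positive, homogeneous of degree one, strictly monotone in each $\kappa_i$, and concave in the shape operator --- precisely the structural hypotheses required in \cite{Gerhardt}. Since horospherical convexity forces $\kappa_i \geq 1$ for all $i$, the initial hypersurface lies in $\Gamma_4^+$ and is geodesically convex, hence star-shaped with respect to any interior point and representable as a radial graph over $\S^{n-1}$.

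With these hypotheses verified, Gerhardt's main theorem furnishes a unique smooth solution for all $t > 0$, the preservation of the graph property, uniform two-sided control on the principal curvatures, and the asymptotic estimate $|h^i_j - \delta^i_j| \leq C e^{-t/(n-1)}$ reflecting exponential convergence to an umbilical round profile. This accounts for every assertion of the proposition except the preservation of the sharp threshold $\kappa_i \geq 1$.

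For that last step the plan is to apply Hamilton's tensor maximum principle to $T^i_j := h^i_j - \delta^i_j$. The evolution equation of the Weingarten operator along $\partial_t X = \phi\nu$ with $\phi = \tfrac{n-4}{4}\sigma_3/\sigma_4$ in a background of constant sectional curvature $-1$ has a zeroth order part proportional to $\phi(h^i_k h^k_j - \delta^i_j)$, which factors as $\phi(h+I)^i_k T^k_j$; it is precisely the ambient curvature $-1$ that supplies the $-\phi\,\delta^i_j$ term making this factorization possible, so that $T \equiv 0$ is an equilibrium rather than a mere subsolution. The concavity of $F$ gives the second-order part of the equation the sign required by the maximum principle, and Hamilton's theorem then propagates $T^i_j \geq 0$ forward in time.

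The main obstacle is precisely this preservation of $\kappa_i \geq 1$: the algebraic identity $h^2 - I = (h+I)(h-I)$ coupled with the fact that the ambient sectional curvature equals $-1$ is what makes horospherical convexity a barrier for this particular flow; in the Euclidean setting the same argument would only propagate the weaker condition $\kappa_i \geq 0$. Combining Gerhardt's estimates with the horospherical convexity preservation then yields the proposition.
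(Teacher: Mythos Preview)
Your strategy coincides with the paper's: invoke Gerhardt for long-time existence, graph preservation and the asymptotic estimate $|h^i_j-\delta^i_j|\le Ce^{-t/(n-1)}$, and then prove preservation of $\kappa_i\ge 1$ by a tensor maximum principle applied to $\tilde h^i_j=h^i_j-\delta^i_j$. The factorisation $h^2-I=(h+I)(h-I)$ is indeed the geometric reason the threshold $1$ is special in curvature $-1$, and the paper's computation of the null-eigenvector condition ultimately reduces to the same algebra.

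Where your argument is incomplete is the treatment of the derivative terms. Once you linearise $-\nabla^i\nabla_j\phi$ with $\phi=\tfrac{n-4}{4}\sigma_3/\sigma_4=-\Phi(F)$ to extract a parabolic operator $\dot\Phi F^{kl}\tilde h^i_{j;kl}$, what remains is not just the reaction term you isolate but also gradient terms of the form $(\Phi\circ F)^{kl,rs}\tilde h_{kl;j}\tilde h_{rs;}{}^{i}$, and these do \emph{not} vanish at a null eigenvector of $\tilde h$. Saying ``concavity of $F$ gives the second-order part the right sign'' does not settle this: the relevant object is the Hessian of $\Phi\circ F=-F^{-1}$, and concavity of $F$ alone does not make $-F^{-1}$ concave. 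Hamilton's original tensor maximum principle is therefore not directly applicable; the paper instead appeals to Andrews' refinement \cite{An}, which requires verifying an additional condition on the gradient terms (condition (ii) in the paper's proof) and uses that the smallest eigenvalue of $h$ equals $1$ at the contact point. You should either carry out that verification or cite Andrews explicitly; as written, the preservation step has a gap.
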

\begin{proof}
In \cite{Gerhardt} Gerhardt studied a more general inverse flow under a
weaker condition that the initial surface is star-shaped.
\begin{equation}\label{flow_gen} \frac d{dt}X=-\Phi(F)\nu,\end{equation}
with a function $\Phi(r)=-r^{-1}$ for $r>0$ and $F$ is a smooth curvature function, homogeneous of degree $1$, monotone, and concave.
 What we only need to check is that
flow (\ref{flow1})  or the general flow \eqref{flow_gen} preserves the condition of horospherical convexity.

By using  (4.23) in \cite{Gerhardt} for the second fundamental form $h_j^i$,
we have the evolution equation for $\tilde h_j^i:=h_j^i-\delta_j^i$ that
\begin{equation}\label{eq_a1}
\begin{array}{rcl}
\ds \dot {\tilde h}^i_j &=& \ds\vs Q(\nabla^2\tilde h, \nabla \tilde h)^i_j+\dot\Phi F^{kl}h_{lr}h^r_k\tilde h_j^i+ \dot\Phi F^{kl}h_{lr}h^r_k \delta_j^i \\
&& \ds\vs +(\Phi-\dot\Phi F) \tilde h^{ik}\tilde h_{kj}+2(\Phi-\dot\Phi F) \tilde h^i_j+(\Phi-\dot\Phi F) \delta^i_j\\
&& \ds\vs -\{(\Phi+\dot\Phi F)\delta^i_j-\dot\Phi F^{kl}g_{kl}\tilde h^i_j-\dot\Phi F^{kl}g_{kl}\delta_j^i\}.\\
&:=&\ds\vs Q(\nabla^2\tilde h, \nabla \tilde h)^i_j+H^i_j,
\end{array}\end{equation}
where
$$
\begin{array}{rcl}
Q(\nabla^2\tilde h, \nabla \tilde h)^i_j &=& \ds\vs
 \dot\Phi F^{kl} \tilde h^i_{j;kl}+\dot\Phi F^{kl,rs} \tilde h_{kl;j} {{ \tilde h_{rs;} }}^{\quad i}+\ddot \Phi(F^{kl}\tilde h_{kl;j})(F^{rs}\tilde h_{rs;}^{\quad i})
 \\
 &=:& \dot\Phi F^{kl} \tilde h^i_{j;kl}+ N^i_j.
\end{array}
$$

In order to use the maximum principle for symmetric tensors in \cite{An}, which is a refinement of Hamilton's maximum principle \cite{BN},
 we need to check the following two statements:
\begin{itemize}
\item[(i)] $H^i_ja^ja_i\ge 0,$
\item[(ii)] $N^i_ja_ia^j+\dot \Phi \sup_{\Gamma}2F^{kl}(2\Gamma_k^p\tilde h_{ip;l} a^i-\Gamma_k^p\Gamma_l^q\tilde h_{pq})\ge 0$,\end{itemize}
for any  $a=(a_1,\cdots,a_{n-1}) \hbox{ with } \tilde h_{j}^ia^j=0$,
where $a^j=g^{jl}a_l$.

From \eqref{eq_a1} it is easy to say that
\[ H^i_ja^ja_i= \dot\Phi( F^{kl}h_{lr}h^r_k+F^{kl}g_{kl}-2F)|a|^2 .\]
Now, we can write
\[
F= F^{kl}h_{kl},
\]
for $F$ is homogeneous of degree $1$. Therefore, we infer in the orthonormal basis
\begin{eqnarray*}
&&F^{kl}h_{lr}h^r_k+F^{kl}g_{kl}-2F\\
&=&F^{kl}g_{ls}(\tilde h_r^s+\delta_r^s)(\tilde h_k^r+\delta_k^r)+F^{kl}g_{kl}-2F^{kl}g_{ls}(\tilde h_k^s+\delta_k^s)\\
&=&F^{kl}g_{ls}\tilde h_{r}^s\tilde h^r_k\ge 0,
\end{eqnarray*}
since  $F^{kl}$ is positive  definite. On the other hand, we have
 \[
\dot\Phi>0.\]
Hence, (i) follows. 

Noticing that
\[N^i_j=(\Phi(F))^{kl,rs}\tilde h_{kl;j} {{ \tilde h_{rs;} }}^{\quad i},\]
and that the smallest eigenvalue of $h$ equals to $1$, we can apply the work of Andrews  \cite{An} to show that statement (ii) holds.
Hence the preservation of the horospherical convexity under \eqref{flow1} follows from his
maximum principle.
 \end{proof}
 In \cite{CM}, Cabezas-Rivas and Miquel showed that the preserving volume mean curvature flow preserves the horospherical convexity.
 See also the work of Makowski \cite{Makowski}.

Let $g$ be a Riemannian metric on $M^{n-1}$.
Denote $Ric_g$ and $R_g$  the Ricci tensor and
the scalar curvature of $g$ respectively.  The Schouten tensor is defined
by
\[ A_g=\frac 1{n-3}\left(Ric_g-\frac {R_g}{2(n-2)} g\right).\]
Let $\Lambda_g$ be the set of the eigenvalues of the Schouten tensor $A_g$ with respect to  the metric $g$. The $\s_k$-scalar curvature, which
is introduced by Viaclovsky, is defined by
\[\s_k(g):=\s_k (\Lambda_g).\]
This is a natural generalization of the scalar curvature $R$. In fact, $\s_1(g)=\frac 1{2(n-2)} R$.
Recall that $M$ is of dimension $n-1$. We now consider the conformal class $[g_{\S^{n-1}}]$ of the standard sphere $\S^{n-1}$ and
the following  functionals defined by
\begin{equation}\label{func2}
{ \mathcal F}_k(g)=vol(g)^{-\frac{n-1-2k} {n-1}}\int_{\S^{n-1}} \s_k(g)\, dg, \quad
k=0,1,...,n-1.
\end{equation}
 If a metric $g$ satisfies $\s_j(g)>0$ for all $j\le k$, we call it $k$-positive and denote $g\in \Gamma_k^+$. We recall some generalized Sobolev inequality.

\begin{prop} Let $0 < k < \frac{n-1} 2$ and $g\in [g_{\S^{n-1}}]$ $k$-positive. We have
\begin{equation}
\label{Sk}
{\mathcal F}_k(g)\ge {\mathcal F}_k(g_{\S^{n-1}})=\frac{C_{n-1}^k}{2^k}\omega_{n-1}^{\frac{2k}{n-1}}.
\end{equation}
Moreover, when $k=2$, $n>5$ and $g\in [g_{\S^{n-1}}]$ $1$-positive, the above inequality still holds.
\end{prop}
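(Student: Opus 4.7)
My plan is to follow the conformal flow approach of Guan--Wang. Write any $k$-positive $g\in[g_{\S^{n-1}}]$ as $g=e^{-2u}g_{\S^{n-1}}$; then the Schouten tensor on the $(n-1)$-sphere takes the form
\[
A_g = \nabla^2 u + du\otimes du - \tfrac{1}{2}|\nabla u|^2 g_{\S^{n-1}} + \tfrac{1}{2}g_{\S^{n-1}},
\]
so $\s_k(g)=\s_k(A_g)$ is a fully nonlinear elliptic operator in $u$ on the admissible cone $\Gamma_k^+$. Since $A_{g_{\S^{n-1}}}=\tfrac12 g_{\S^{n-1}}$ has eigenvalues all equal to $1/2$, one reads off $\cF_k(g_{\S^{n-1}})=C_{n-1}^k\omega_{n-1}^{2k/(n-1)}/2^k$, matching the conjectured extremal value.

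First I would set up the normalized conformal $\s_k^{1/k}$-flow $\pr_t g=-(\s_k(g)^{1/k}-r_k(t))g$, with $r_k(t)$ chosen so that the volume is preserved. The key step is to show that $\cF_k$ is monotone non-increasing along this flow; this rests on the divergence-free property of the Newton transformation $T_{k-1}(A_g)$, which holds because $\S^{n-1}$ is locally conformally flat and makes $\int \s_k(g)\,dg$ variational (Viaclovsky, Branson--Gover). Preservation of the cone $\Gamma_k^+$ is ensured by Andrews' tensor maximum principle applied to the evolution equation for $A_g$, exactly as in the horospherical convexity argument used earlier in the paper.

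Next I would prove long-time existence and smooth convergence of $g(t)$ to a metric $g_\infty$ of constant $\s_k^{1/k}$-curvature. The subcritical assumption $2k<n-1$ is crucial here: it yields uniform $L^\infty$ bounds on $u$ via Moser iteration from the Sobolev inequality and rules out concentration, so that standard parabolic regularity delivers smooth convergence. The Obata-type classification of constant $\s_k$-curvature metrics in $[g_{\S^{n-1}}]$ (Viaclovsky, Chang--Gursky--Yang) then forces $g_\infty$ to be a round metric pulled back by some $\varphi\in\mathrm{Conf}(\S^{n-1})$. Since $\cF_k$ is M\"obius invariant, $\cF_k(g_\infty)=\cF_k(g_{\S^{n-1}})$, and monotonicity of $\cF_k$ along the flow gives the claimed inequality; equality forces the flow to be stationary, which characterizes the extremals.

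The second assertion ($k=2$, $n>5$, only $1$-positive) is the hardest part, because the flow is no longer elliptic outside $\Gamma_2^+$ and the convergence argument breaks down. I would handle it by directly exploiting the algebraic identity $\s_2(A_g)=\tfrac12((\tr A_g)^2-|A_g|^2)$, integrating by parts against the conformal factor $u$ to rewrite $\int_{\S^{n-1}}\s_2(g)\,dg$ as a sum of Dirichlet-type energies controlled by $R_g$ plus a manifestly non-negative remainder, and closing the estimate via the sharp Yamabe inequality on $\S^{n-1}$. The dimension condition $n-1>4$ (i.e. $n>5$) is precisely what keeps the Sobolev exponent subcritical, so that only positivity of the scalar curvature $R_g\sim \s_1(g)$---rather than full $\Gamma_2^+$ admissibility---is needed to absorb the bad terms. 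Making this absorption work with $1$-positivity alone is the main technical obstacle I anticipate.
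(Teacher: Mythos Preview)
Your treatment of the first assertion is essentially the Guan--Wang conformal flow argument that the paper simply cites (\cite{GuanWang}, Theorem~1.A); there is no discrepancy there, only that you reproduce the proof while the paper quotes the result.

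The second assertion is where your proposal diverges from the paper and where the real gap sits. You correctly sense that the flow machinery is unavailable once you leave $\Gamma_2^+$, but your suggested fix---rewrite $\int\sigma_2(g)\,dg$ via the identity $\sigma_2(A_g)=\tfrac12((\tr A_g)^2-|A_g|^2)$, integrate by parts against $u$, and absorb bad terms using Yamabe---is not a plan that leads to the \emph{sharp} constant $\tfrac{(n-1)(n-2)}{8}\omega_{n-1}^{4/(n-1)}$. Integration by parts of this sort typically produces Dirichlet energies with coefficients that do not match the extremal, and you give no mechanism for recovering exact equality on round metrics from such an expansion.

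The paper's route is much cleaner and is the missing idea: it \emph{chains two sharp inequalities}. First it invokes a sharp conformal quotient inequality from \cite{GeWang1} (see also \cite{GuanLinWang}), valid for $g\in\Gamma_1^+$,
\[
\Big(\int_{\S^{n-1}}\sigma_1(g)\,dg\Big)^{-\frac{n-5}{n-3}}\int_{\S^{n-1}}\sigma_2(g)\,dg
\;\ge\;\Big(\tfrac{n-1}{2}\Big)^{-\frac{n-5}{n-3}}\tfrac{(n-1)(n-2)}{8}\,\omega_{n-1}^{\frac{2}{n-3}},
\]
which compares the $\sigma_2$ integral not to volume but to a power of the $\sigma_1$ integral. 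Then it applies the ordinary sharp Yamabe inequality $\mathcal F_1(g)\ge \tfrac{n-1}{2}\omega_{n-1}^{2/(n-1)}$ to pass from $\int\sigma_1$ to volume. A short computation with the exponents shows the product telescopes exactly to $\mathcal F_2(g)\ge \mathcal F_2(g_{\S^{n-1}})$; the condition $n>5$ is what makes $(n-5)/(n-3)>0$ so that the Yamabe step goes the right way. Your ``main technical obstacle'' is thus bypassed entirely by inserting $\int\sigma_1$ as an intermediate scale rather than trying to reach volume in one step.
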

Inequality (\ref{Sk}) is  a generalized Sobolev inequality, since when $k=1$ inequality \eqref{Sk} is just the optimal Sobolev inequality. See for example \cite{Beckner} and \cite{ChangYang}.
\begin{proof}
The first part follows from Theorem 1.A in \cite{GuanWang}.  When  $k=2$, $n>5$ and $g\in [g_{\S^{n-1}}]$ $1$-positive, by Theorem 1 in \cite{GeWang1} (see also \cite{GuanLinWang}), we infer
\[
(\int_{\S^{n-1}} \s_1(g)\, dg)^{-\frac{n-5} {n-3}}\int_{\S^{n-1}} \s_2(g)\, dg\ge (\frac{n-1}{2})^{-\frac{n-5}{n-3}}\frac{(n-1)(n-2)}{8}\omega_{n-1}^{\frac{2}{n-3}}.
\]
On the other hand, since $g$ is $1$-positive, we have
\[
{\mathcal F}_1(g)\ge {\mathcal F}_1(g_{\S^{n-1}})=\frac{n-1}{2}\omega_{n-1}^{\frac{2}{n-1}}.
\]
Hence, the desired result yields.
\end{proof}

\section{An Alexsandrov-Fenchel inequality in the hyperbolic space}
First, a direct computation gives the following result.
\begin{lemm}
Along the inverse flow (\ref{flow1}), we have
\begin{eqnarray}\label{L2evolve}
&&\frac{d}{dt}\int_{\Sigma}\bigg\{\sigma_4-\frac{(n-3)(n-4)}{6}\sigma_2+\frac{(n-1)(n-2)(n-3)(n-4)}{24}\bigg\}\nonumber\\
&=&(n-5)\int_{\Sigma}\bigg\{\sigma_4-\frac{(n-3)(n-4)}{6}\sigma_2+\frac{(n-1)(n-2)(n-3)(n-4)}{24}\bigg\}\nonumber\\
&&+\frac{n-4}{4}\bigg\{\int_{\Sigma}\bigg(5\frac{\sigma_5\sigma_3}{\sigma_4}-\frac{4(n-5)}{n-4}\sigma_4\bigg)+\frac{(n-4)(n-5)}{6}\bigg(\frac{4(n-3)}{n-4}\sigma_2-3\frac{\sigma_3^2}{\sigma_4}\bigg)\\
&&+\int_{\Sigma}\frac{(n-2)(n-3)(n-4)(n-5)}{24}\bigg(\frac{\sigma_1\sigma_3}{\sigma_4}-\frac{4(n-1)}{n-4}\bigg)\bigg\}.\nonumber
\end{eqnarray}
\end{lemm}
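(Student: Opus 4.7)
Proof plan. The formula (\ref{L2evolve}) is a direct algebraic consequence of the general variational identity (\ref{var}) applied to each of the three pieces $\sigma_4$, $-\frac{(n-3)(n-4)}{6}\sigma_2$ and the constant $\frac{(n-1)(n-2)(n-3)(n-4)}{24}$ that make up the Gauss--Bonnet density $l_2$, with speed function $F=\frac{n-4}{4}\frac{\sigma_3}{\sigma_4}$.

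First I would substitute $F$ into (\ref{var}) for $k=4$, $k=2$ and $k=0$ (the last case uses $\sigma_{-1}=0$ and is just the area evolution $\frac{d}{dt}d\mu=F\sigma_1\,d\mu$). This immediately produces
\begin{eqnarray*}
\frac{d}{dt}\int_\Sigma\sigma_4\,d\mu &=& \frac{n-4}{4}\int_\Sigma\left(5\frac{\sigma_3\sigma_5}{\sigma_4}+(n-4)\frac{\sigma_3^2}{\sigma_4}\right)d\mu,\\
\frac{d}{dt}\int_\Sigma\sigma_2\,d\mu &=& \frac{n-4}{4}\int_\Sigma\left(3\frac{\sigma_3^2}{\sigma_4}+(n-2)\frac{\sigma_1\sigma_3}{\sigma_4}\right)d\mu,\\
\frac{d}{dt}\int_\Sigma d\mu &=& \frac{n-4}{4}\int_\Sigma\frac{\sigma_1\sigma_3}{\sigma_4}\,d\mu.
\end{eqnarray*}

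Next I would take the linear combination with the coefficients of $l_2$ and collect like terms. A short computation of the resulting coefficients of $\frac{\sigma_3\sigma_5}{\sigma_4}$, $\frac{\sigma_3^2}{\sigma_4}$ and $\frac{\sigma_1\sigma_3}{\sigma_4}$ gives
$$\frac{d}{dt}\int_\Sigma l_2\,d\mu = \frac{n-4}{4}\int_\Sigma\left(5\frac{\sigma_3\sigma_5}{\sigma_4} - \frac{(n-4)(n-5)}{2}\frac{\sigma_3^2}{\sigma_4} + \frac{(n-2)(n-3)(n-4)(n-5)}{24}\frac{\sigma_1\sigma_3}{\sigma_4}\right)d\mu,$$
which already is the identity in raw form. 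The final step is purely cosmetic: I add and subtract $(n-5)\int l_2$ so that the ``flat'' counterterms $-\frac{4(n-5)}{n-4}\sigma_4$, $\frac{4(n-3)}{n-4}\sigma_2$ and $-\frac{4(n-1)}{n-4}$ appear inside the three bracketed integrands on the RHS of (\ref{L2evolve}); the three cancellations (of the coefficients of $\int\sigma_4$, $\int\sigma_2$ and $\int d\mu$) then reduce to elementary arithmetic with the dimensional constants $\frac{(n-3)(n-4)}{6}$ and $\frac{(n-1)(n-2)(n-3)(n-4)}{24}$.

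There is no genuine obstacle here: once (\ref{var}) is invoked the proof is mechanical, with only the bookkeeping of these dimensional constants requiring attention. What is interesting is not the derivation but the chosen form of the output: the factor $(n-5)\int l_2$ has been deliberately extracted so that, under horospherical convexity, each of the three remaining bracketed integrands carries a definite sign readable from the Newton--MacLaurin inequalities of Lemma \ref{lem}. This is precisely the format needed to drive the monotonicity of the functional $Q$ in the subsequent argument.
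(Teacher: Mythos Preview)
Your proof is correct and follows essentially the same route as the paper: apply the variational identity (\ref{var}) term by term, substitute $F=\frac{n-4}{4}\frac{\sigma_3}{\sigma_4}$, and regroup by adding and subtracting $(n-5)\int_\Sigma l_2$. The only cosmetic difference is that the paper first combines the three pieces of $l_2$ into a single expression in $F$ and then substitutes, whereas you substitute first and combine afterward; the arithmetic is identical.

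One remark on your closing commentary: it is not true that each of the three bracketed integrands carries a \emph{favorable} sign from Newton--MacLaurin. The first two are nonpositive, but the third, $\frac{\sigma_1\sigma_3}{\sigma_4}-\frac{4(n-1)}{n-4}$, is \emph{nonnegative} by (\ref{eq22}), which is the wrong direction for monotonicity of $Q$. This is exactly the obstruction the paper highlights, and why the refined inequality of Proposition~\ref{keyprop.} (requiring horospherical convexity) is needed rather than Lemma~\ref{lem} alone.
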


\begin{proof}
Under the flow (\ref{flow_g}), (\ref{var}) yields that
\begin{eqnarray*}
&&\frac{d}{dt}\int_{\Sigma}\bigg\{\sigma_4-\frac{(n-3)(n-4)}{6}\sigma_2+\frac{(n-1)(n-2)(n-3)(n-4)}{24}\bigg\}\\
&=&\int_{\Sigma} 5\sigma_5 F-\frac{(n-4)(n-5)}{2}\sigma_3 F+\frac{(n-2)(n-3)(n-4)(n-5)}{24}\sigma_1 F.
\end{eqnarray*}
Substituting  $F=\frac{n-4}{4}\frac{\sigma_3}{\sigma_4}$ into the previous formula and arranging it,  we thus get the desired result (\ref{L2evolve}).
\end{proof}

\begin{rema}
\label{Remark1}
When $n=5$, Lemma 3.1 implies that $\int_\Sigma l_2$ is a constant. This is acturally the fact that $\int_\Sigma l_2$
is the Euler characteristic of $\Sigma$ up to a constant multiple.

\end{rema}

Compared the last three terms in (\ref{L2evolve}) with the Newton-MacLaurin inequalities (\ref{eq21}),(\ref{eq22}), one will see immediately that
the first two terms are non-positive, but  the last one is non-negative. Therefore, unlike in the papers of \cite{BHW}, \cite{deLG} and \cite{LWX} we can not use the Newton-MacLaurin inequalities directly to establish the desired inequalities. We have to use more precise inequalities, which are
fortunately true for any
\begin{equation}\label{h-convex}
\kappa\in \{\kappa=(\kappa_1,\kappa_2,\cdots,\kappa_{n-1})\in\R^{n-1}\,|\, \kappa_i\ge 1\}.
\end{equation}
This is one of key points of this paper.

\begin{prop}\label{keyprop.}
 Let $n>5$. For any $\kappa$ satisfying (\ref{h-convex}) we have a refined Newton-MacLaurin inequality
\begin{align}\label{p1}\bigg(5\frac{\sigma_5\sigma_3}{\sigma_4}-\frac{4(n-5)}{n-4}\sigma_4\bigg)+\frac{(n-4)(n-5)}{6}\bigg(\frac{4(n-3)}{n-4}\sigma_2-3\frac{\sigma_3^2}{\sigma_4}\bigg)\nonumber\\
+\frac{(n-2)(n-3)(n-4)(n-5)}{24}\bigg(\frac{\sigma_1\sigma_3}{\sigma_4}-\frac{4(n-1)}{n-4}\bigg)\le0.
\end{align}Equality holds if and only if
one of the following two cases holds
\begin{equation}\label{=}
\hbox{either } \quad (i)\,  \kappa_i=\kappa_j \, \forall \, i,j,  \quad\hbox{ or }\quad (ii)\, \exists \, i\, \hbox{ with }\kappa_i >1\, \& \, \kappa_j=1 \, \forall j\neq i.\end{equation}
\end{prop}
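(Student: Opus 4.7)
The inequality is a symmetric polynomial statement in the principal curvatures on the region $\{\kappa_i\ge 1\}$, so my strategy is to exploit horospherical convexity by passing to the shifted variables $\mu_i := \kappa_i - 1 \ge 0$. Set $\tilde\sigma_j := \sigma_j(\mu)$. A direct combinatorial check gives the expansion
\begin{equation*}
\sigma_k(\kappa) = \sum_{j=0}^{k}\binom{n-1-j}{k-j}\,\tilde\sigma_j,
\end{equation*}
which makes each $\sigma_k(\kappa)$ a polynomial in $\tilde\sigma_0,\ldots,\tilde\sigma_k$ with non-negative coefficients; note also $\tilde\sigma_j \ge 0$ since $\mu_i \ge 0$. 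Multiplying \eqref{p1} through by $\sigma_4 > 0$ clears denominators and reduces the claim to a symmetric polynomial inequality $P(\kappa)\le 0$ of degree at most $8$, which after substitution becomes a polynomial inequality $\widetilde P(\tilde\sigma)\le 0$ on the orthant $\{\tilde\sigma_j\ge 0\}$.

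Before doing the main computation, I use the two equality loci to understand the structure of $\widetilde P$. On the diagonal $\mu_i=c-1$ for all $i$, each of the three Newton--MacLaurin inequalities in Lemma~\ref{lem} controlling a bracket of \eqref{p1} becomes an equality, so each bracket vanishes separately; this is case~(i). On an axis $\mu=(t,0,\ldots,0)$, we have $\tilde\sigma_j = 0$ for $j\ge 2$ and $\tilde\sigma_1 = t$, so every $\sigma_k(\kappa)$ is affine in $t$; substituting into $P$ and matching the coefficients of $t^0, t^1, t^2$ must give the zero polynomial, and this algebraic identity pins down the precise rational factors $\frac{(n-4)(n-5)}{6}$ and $\frac{(n-2)(n-3)(n-4)(n-5)}{24}$ occurring in \eqref{p1}. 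Consequently $\widetilde P$ vanishes on every coordinate axis, and hence lies in the ideal generated by $\{\tilde\sigma_j : j \ge 2\}$.

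The core of the proof is then to produce an explicit non-negative decomposition
\begin{equation*}
-\widetilde P = \sum_{k\ge 2} \alpha_k(\tilde\sigma)\left(\frac{k(n-k-1)}{(k+1)(n-k)}\tilde\sigma_k^2 - \tilde\sigma_{k-1}\tilde\sigma_{k+1}\right) + (\text{similar non-negative contributions from \eqref{eq22}}),
\end{equation*}
where each $\alpha_k(\tilde\sigma)\ge 0$ is a polynomial with non-negative coefficients in the $\tilde\sigma_j$'s. The parenthesised Newton--MacLaurin defects are non-negative on $\{\mu_i\ge 0\}$ by Lemma~\ref{lem} applied to $\mu$, and for $k\ge 2$ they vanish on the axis, matching the axis-vanishing of $\widetilde P$ established above. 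The main obstacle is purely the bookkeeping: after the shift, $\widetilde P$ expands into many monomials in $\tilde\sigma$, and arranging them into a manifestly non-negative combination of Newton--MacLaurin defects requires careful organization (in practice, working degree-by-degree in $\mu$). Once such a decomposition is exhibited, the equality characterization follows by tracing equality back through Lemma~\ref{lem}: equality forces every defect with a generically positive coefficient to vanish, which restricts $\mu$ either to a multiple of $(1,\ldots,1)$ (case~(i)) or to a single coordinate axis (case~(ii)), exactly as in \eqref{=}.
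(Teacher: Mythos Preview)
Your shift $\mu_i=\kappa_i-1$ is a legitimate strategy; the paper itself notes (Remark~\ref{rk1}) that the proposition can be proved via this substitution together with the ordinary Newton--MacLaurin inequalities. However, your proposal stops precisely where the work begins. You assert that $-\widetilde P$ admits a decomposition into Newton--MacLaurin defects in $\mu$ with non-negative polynomial coefficients $\alpha_k(\tilde\sigma)$, call the obstacle ``purely the bookkeeping,'' and then proceed as if the decomposition were already in hand. But that decomposition \emph{is} the proposition: nothing has been proved until it is written down and each coefficient is checked to be non-negative. Your structural observations (axis-vanishing, hence membership in the ideal $(\tilde\sigma_2,\tilde\sigma_3,\ldots)$) constrain the shape of $\widetilde P$ but do not by themselves produce a sign; and your equality analysis presupposes knowledge of which $\alpha_k$ are generically positive, which you cannot have without the explicit decomposition.

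For comparison, the paper avoids this open-ended bookkeeping altogether. After normalizing to $p_k=\sigma_k/C_{n-1}^k$, it rewrites \eqref{p1} as the sum of two simpler inequalities, $3(p_2p_4-p_3^2)+(p_1p_3-p_4)\le 0$ and $3(p_5p_3-p_4^2)+(p_1p_3-p_4)\le 0$. A Rolle-type argument on the polynomial $\prod_i(x+\kappa_i)$ shows that the $p_k$ for $n-1$ numbers $\kappa_i\ge 1$ coincide with the $p_k$ for the $n-2$ critical values, which are again $\ge 1$; this reduces each claim by induction to a single base dimension ($n-1=4$ and $n-1=5$ respectively), where an explicit cyclic-sum identity such as
\[
3(p_2p_4-p_3^2)+(p_1p_3-p_4)=\frac{1}{16}\sum_{cyc}\kappa_1\kappa_2(1-\kappa_1\kappa_2)(\kappa_3-\kappa_4)^2
\]
makes both the sign and the equality cases \eqref{=} manifest. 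This dimension-reduction step is the idea your proposal is missing; if you want to pursue the $\mu$-shift route instead, you must actually exhibit the promised decomposition.
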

\begin{proof} For simplicity of notation, we denote
\begin{align}
p_k=\frac{\sigma_k}{C_{n-1}^k}.
\end{align}
By a direct computation, it is easy to see that \eqref{p1} is equivalent to
\begin{align} \label{p2}
\bigg(\frac{p_5p_3}{p_4}-p_4\bigg)
+2\bigg(p_2-\frac{p_3^2}{p_4}\bigg)
+\bigg(\frac{p_1p_3}{p_4}-1\bigg)\le0.
\end{align}
This inequality follows directly from the following two claims.

\vspace{0.2cm}
\noindent{\bf Claim 1.}  $3(p_2p_4-p_3^2)+(p_3p_1-p_4)\le0.$\quad Equality holds if and only if $\kappa$ satisfies \eqref{=}.
\vspace{0.2cm}

\noindent{\bf Claim 2.}  $3(p_5p_3-p_4^2)+(p_3p_1-p_4)\le0.$\quad Equality holds if and only if $\kappa$ satisfies \eqref{=}.

\

In the proof of these two claims, we replace $n-1$ by $n$ for simplicity of notation
and consider $\kappa=(\kappa_1,\kappa_2,\cdots,\kappa_n)\in \R^n$ with
\[\kappa_i\ge 1, \quad \forall i,\]
and denote $p_k$  the average of
$n$-choose-$k$ type products for $n$ real numbers $\kappa_1,\cdots, \kappa_n$ with  $\kappa_j \ge1$ ($1\le j\le n$).

Let
\begin{align}
F_n(x)&=x^n+C_n^1p_1x^{n-1}+C_n^2p_2x^{n-2}+\cdots+C_n^{n-1}p_{n-1}x+p_n=\Pi_{i=1}^n(x+\kappa_i),
\end{align}
which has exactly $n$ real roots  $-\kappa_i\le-1.$ By the mean value theorem we have that
\begin{align}
\frac1n F_n'(x)=x^{n-1}+\frac{n-1}{n}C_n^1 p_1x^{n-2}+\frac{n-2}{n}C_n^2p_2x^{n-3}+\cdots+\frac{1}{n}C_n^{n-1}p_{n-1}=\Pi_{i=1}^{n-1}(x+\tilde \kappa_i),
\end{align}
is a $(n-1)$-degree
polynomial with $n-1$ real roots $-\tilde k_i \le-1$ ($1\le i\le n-1$). We can write
\begin{align}
\frac1n F_n'(x)=x^{n-1}+C_{n-1}^1p_1x^{n-2}+C_{n-1}^2p_2x^{n-3}+\cdots+C_{n-1}^{n-2}p_{n-2}x+p_{n-1}=\Pi_{i=1}^{n-1}(x+\tilde \kappa_i).
\end{align}
This means that $p_i(1\le i\le n-1)$ of $\kappa\in \R^n$ can be viewed as the average of $(n-1)$-choose-$i$ type products of $\tilde \kappa=(\tilde \kappa_1,\cdots,
\tilde \kappa_{n-1}) \in \R^{n-1}$ with  $\tilde k_j\ge1$ for all $j$.
Namely
\[p_i(\kappa)=p_i(\tilde \kappa), \quad\hbox{ for } 1\le i\le n-1.\]
Therefore, by an argument of mathematical induction, it suffices to prove Claim 1 for $n=4$, and prove Claim 2 for $n=5$.
\vspace{2mm}

\noindent{\it Proof of {Claim 1} for $n=4$}: Given $n$ numbers $(\kappa_1,\kappa_2,\cdots, \kappa_n)$, we denote $\sum\limits_{cyc}f(\kappa_1,\cdots,\kappa_n)$
the cyclic summation which takes over all {\it different} terms of the type $f(\kappa_1,\cdots,\kappa_n)$.
For instance,
\begin{align*}
&\sum_{cyc}\kappa_1=\kappa_1+\kappa_2+\cdots+\kappa_n,\qquad\sum_{cyc}\kappa_1^2\kappa_2=\sum_{i=1}^n\Big(\kappa_i^2\sum_{j\neq i}\kappa_j\Big),\\
&\sum_{cyc}\kappa_1(\kappa_2-\kappa_3)^2=\sum_{i=1}^n\bigg(\kappa_i\sum_{\substack {1\leq j<k\leq n\\j,k\neq i}}(\kappa_j-\kappa_k)^2\bigg),\\
&\qquad\qquad\qquad\quad\;={(n-2)}\sum_{cyc}\kappa_1\kappa_2^2-6\sum_{cyc}\kappa_1\kappa_2\kappa_3.
\end{align*}
When $n=4$, we have
\begin{align*}
p_1=\frac14\sum_{cyc}\kappa_1,\quad p_2=\frac16\sum_{cyc}\kappa_1\kappa_2,\quad p_3=\frac14\sum_{cyc}\kappa_1\kappa_2\kappa_3,\quad p_4=\kappa_1\kappa_2\kappa_3\kappa_4.
\end{align*}
Then we can get
\begin{align}
p_1p_3-p_4=&\frac{1}{16}\Big((\kappa_1+\kappa_2+\kappa_3+\kappa_4)(\kappa_1\kappa_2\kappa_3+\kappa_1\kappa_2\kappa_4+\kappa_1\kappa_3\kappa_4+\kappa_2\kappa_3\kappa_4)-16\kappa_1\kappa_2\kappa_3\kappa_4\Big)\nonumber\\
=&\frac{1}{16}\sum_{cyc}\kappa_1\kappa_2(\kappa_3-\kappa_4)^2,\\
3(p_2p_4-p_3^2)=&3\Big(\frac{1}{6}\kappa_1\kappa_2\kappa_3\kappa_4(\kappa_1\kappa_2+\kappa_1\kappa_3+\kappa_2\kappa_3+\kappa_1\kappa_4+\kappa_2\kappa_4+\kappa_3\kappa_4)\nonumber\\
&-\frac1{16}(\kappa_1\kappa_2\kappa_3+\kappa_1\kappa_2\kappa_4+\kappa_1\kappa_3\kappa_4+\kappa_2\kappa_3\kappa_4)^2\Big)\nonumber\\
=&-\frac{1}{16}\sum_{cyc}\kappa_1^2\kappa_2^2(\kappa_3-\kappa_4)^2,
\end{align}
from above, we can infer that
\begin{align}
3(p_2p_4-p_3^2)+p_1p_3-p_4=\frac{1}{16}\sum_{cyc}\kappa_1\kappa_2(1-\kappa_1\kappa_2)(\kappa_3-\kappa_4)^2\le 0.
\end{align}
This proves Claim 1.\vspace{2mm}

\noindent{\it Proof of {Claim 2} for $n=\noindent5$}: We have:
\begin{align*}
p_1=\frac15\sum_{cyc}\kappa_1,\quad p_3=\frac1{10}\sum_{cyc}\kappa_1\kappa_2\kappa_3,\quad
 p_4=\frac15\sum_{cyc}{\kappa_1\kappa_2\kappa_3\kappa_4},\quad p_5=\kappa_1\kappa_2\kappa_3\kappa_4\kappa_5.
\end{align*}
Hence
\begin{align}
p_1p_3-p_4=&\frac{1}{50}\Big((\kappa_1+\kappa_2+\kappa_3+\kappa_4+\kappa_5)\sum_{cyc}\kappa_1\kappa_2\kappa_3-10\kappa_1\kappa_2\kappa_3\kappa_4\kappa_5\sum_{i=1}^5\frac{1}{\kappa_i}\Big)\nonumber\\
=&\frac{1}{50}\Big(\sum_{cyc}\kappa_1^2\kappa_2\kappa_3-6\sum_{cyc}\kappa_1\kappa_2\kappa_3\kappa_4\Big)\nonumber\\
=&\frac{1}{100}\sum_{cyc}\kappa_1\kappa_2(\kappa_3-\kappa_4)^2,\\
3(p_5p_3-p_4^2)=&3\Big(\frac1{10}\kappa_1\kappa_2\kappa_3\kappa_4\kappa_5\sum_{cyc}\kappa_1\kappa_2\kappa_3-\frac1{25}\big(\sum_{cyc}\kappa_1\kappa_2\kappa_3\kappa_4\big)^2\Big)\nonumber\\
=&\frac{3}{50}\Big(-2\sum_{cyc}(\kappa_1\kappa_2\kappa_3\kappa_4)^2+\kappa_1\kappa_2\kappa_3\kappa_4\kappa_5\sum_{cycle}\kappa_1\kappa_2\kappa_3\Big)\nonumber\\
=&\frac{3}{100}\Big(-\sum_{cyc}(\kappa_1\kappa_2\kappa_3)^2(\kappa_4-\kappa_5)^2\Big).
\end{align}
Therefore, we have
\begin{align}
3(p_5p_3-p_4^2)+(p_3p_1-p_4)=&\frac{1}{100}\sum_{cyc}\kappa_1\kappa_2(\kappa_3-\kappa_4)^2
+\frac{3}{100}\Big(-\sum_{cyc}(\kappa_1\kappa_2\kappa_3)^2(\kappa_4-\kappa_5)^2\Big)\nonumber\\
=&\frac{1}{100}\sum_{cyc}\big[\kappa_1\kappa_2+\kappa_2\kappa_3+\kappa_1\kappa_3-3(\kappa_1\kappa_2\kappa_3)^2\big](\kappa_4-\kappa_5)^2\le0,
\end{align}
which implies Claim 2.  Therefore the proof completes.
\end{proof}

\begin{rema}\label{rk1}
 One can also prove the Proposition by using a replacement with $\kappa_i=1+\tilde\kappa_i$ with $\tilde \kappa_i\ge 0$ for all $i$ and the ordinary Newtow-McLaughlin inequalities
 (Lemma \ref{lem}). With the same proof we present here, Proposition \ref{keyprop.} holds for $\kappa\in \R^{n-1}$ with $\kappa_i\kappa_j\ge 1$ for all $i\neq j$. This is equivalent to
the condition that the sectional curvature of $\Sigma$ is non-negative. We believe that the results proved in this paper for horospherical convex hypersurfaces hold
also for hypersurfaces with non-negative sectional curvature.
\end{rema}

\begin{rema}\label{rk2}
 From the proof of Proposition \ref{keyprop.},  it is easy to see that
\eqref{p1} changes sign for $\kappa\in \R^{n-1}$ with $0\le\kappa_i\le 1$.
\end{rema}

Now we have a monotonicity of $Q(\Sigma_t)$ defined by \eqref{func}  under flow \eqref{flow}.
\begin{thm}\label{thm1} Functional $Q(\Sigma_t)$ is non-increasing under  the flow \eqref{flow}, provided that the initial surface is horospherical convex.
\end{thm}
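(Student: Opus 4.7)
The plan is to differentiate $Q(\Sigma_t)$ along the flow directly and verify non-positivity by combining four inputs: the preservation of horospherical convexity, the exact evolution formula of Lemma 3.1, the refined Newton--MacLaurin inequality of Proposition \ref{keyprop.}, and the classical Newton--MacLaurin inequality \eqref{eq22}.

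\textbf{Step 1 (Area growth).} Setting $k=0$ in \eqref{var} and inserting $F=\frac{n-4}{4}\sigma_3/\sigma_4$ gives
$$\frac{d}{dt}|\Sigma_t| \;=\; \frac{n-4}{4}\int_{\Sigma_t}\frac{\sigma_1\sigma_3}{\sigma_4}\,d\mu.$$
Applying \eqref{eq22} with $k=4$ yields $\sigma_1\sigma_3/\sigma_4 \geq 4(n-1)/(n-4)$ pointwise, hence
$$\frac{1}{|\Sigma_t|}\frac{d|\Sigma_t|}{dt} \;\geq\; n-1.$$
The right-hand side is precisely the scaling exponent built into $Q$.

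\textbf{Step 2 (Pointwise non-negativity of $l_2$).} Writing $\kappa_i=1+\tilde\kappa_i$ with $\tilde\kappa_i\geq 0$ and using the binomial identity
$$\sigma_k(\kappa)=\sum_{j=0}^{k}C_{n-1-j}^{k-j}\,\sigma_j(\tilde\kappa),$$
a direct computation shows that in the expression for $l_2$ both the constant term and the coefficient of $\sigma_1(\tilde\kappa)$ cancel identically, leaving
$$l_2 \;=\; \frac{(n-3)(n-4)}{3}\sigma_2(\tilde\kappa)+(n-4)\sigma_3(\tilde\kappa)+\sigma_4(\tilde\kappa) \;\geq\; 0.$$
By Proposition 2.2 the horospherical convexity is preserved by the flow, so this bound holds on every $\Sigma_t$.

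\textbf{Step 3 (Combination).} Put $\alpha=\frac{n-5}{n-1}$. Differentiating $Q=|\Sigma_t|^{-\alpha}\int_{\Sigma_t}l_2$ and substituting \eqref{L2evolve} gives
$$\frac{dQ}{dt}\;=\;|\Sigma_t|^{-\alpha}\left\{(n-5)\left(1-\frac{1}{n-1}\cdot\frac{1}{|\Sigma_t|}\frac{d|\Sigma_t|}{dt}\right)\int_{\Sigma_t}l_2\,d\mu \;+\; \frac{n-4}{4}\int_{\Sigma_t}R\,d\mu\right\},$$
where $R$ is the bracketed integrand on the right-hand side of \eqref{L2evolve}. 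The first summand is the product of $(n-5)\geq 0$, a factor $\leq 0$ by Step 1, and $\int l_2\geq 0$ by Step 2, hence $\leq 0$; the second summand is $\leq 0$ by Proposition \ref{keyprop.}. Therefore $dQ/dt\leq 0$.

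The main obstacle is Step 2. The integrand $l_2$ is an indefinite combination of $\sigma_4$, $\sigma_2$ and a positive constant whose sign is not apparent from its definition, and in fact $l_2=0$ at the horospherical boundary $\kappa=(1,\ldots,1)$. The algebraic cancellation that the substitution $\kappa=1+\tilde\kappa$ simultaneously kills both the constant term and the linear coefficient in $\tilde\kappa$ is what makes the overall balance succeed: without it, the first summand in \eqref{L2evolve} could not be absorbed by the Newton--MacLaurin bound on the area growth, and the monotonicity would fail.
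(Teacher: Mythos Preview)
Your proof is correct and follows essentially the same route as the paper: use Lemma~3.1 together with Proposition~\ref{keyprop.} to control the evolution of $\int_{\Sigma_t} l_2$, use \eqref{eq22} for the area growth, and combine. The one noteworthy addition is your Step~2: the paper's ``Combining (\ref{l2evolve}) and (\ref{area})'' tacitly requires $\int_{\Sigma_t} l_2\ge 0$ (otherwise the inequality $\frac{d}{dt}|\Sigma_t|\ge (n-1)|\Sigma_t|$ goes the wrong way when multiplied by $-\frac{n-5}{n-1}\int l_2$), and your substitution $\kappa=1+\tilde\kappa$ supplies exactly this missing ingredient, even pointwise.
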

\begin{proof}
By Proposition 2.2, Lemma 3.1 and  Proposition \ref{keyprop.}, we have
\begin{eqnarray}\label{l2evolve}
&&\frac{d}{dt}\int_{\Sigma}\bigg\{\sigma_4-\frac{(n-3)(n-4)}{6}\sigma_2+\frac{(n-1)(n-2)(n-3)(n-4)}{24}\bigg\}\nonumber\\
&\leq&(n-5)\int_{\Sigma}\bigg\{\sigma_4-\frac{(n-3)(n-4)}{6}\sigma_2+\frac{(n-1)(n-2)(n-3)(n-4)}{24}\bigg\}.
\end{eqnarray}
On the other hand, by (\ref{var}) and (\ref{eq22}), we also have
\begin{equation}\label{area}
\frac{d}{dt}|\Sigma_t|=\int_{\Sigma_t}\frac{n-4}{4}\frac{\sigma_3\sigma_1}{\sigma_4}d\mu\geq (n-1)|\Sigma_t|.
\end{equation}
Combining (\ref{l2evolve}) and (\ref{area}) together, we complete the proof.

\end{proof}

\begin{thm}\label{thm2} For any  horospherical convex hypersurface  $\Sigma$ in $\H^n$  with $n>5$, we have
\begin{equation}\label{ineq2} Q(\Sigma)\ge \frac{(n-1)(n-2)(n-3)(n-4)}{24}\omega_{n-1}^{\frac{4}{n-1}}.
\end{equation}
Equality holds if and only if $\Sigma$ is a geodesic sphere.
\end{thm}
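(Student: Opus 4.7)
The plan is to deduce \eqref{ineq2} by combining the monotonicity of $Q$ along the inverse curvature flow \eqref{flow1} (Theorem \ref{thm1}) with the generalized Sobolev inequality \eqref{Sk}, via an asymptotic analysis of the flow. Since $Q(\Sigma_t)$ is non-increasing along the flow started from $\Sigma$, it suffices to show
\[
\lim_{t\to\infty}Q(\Sigma_t)\ge\frac{(n-1)(n-2)(n-3)(n-4)}{24}\omega_{n-1}^{4/(n-1)}.
\]

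First I would rewrite the integrand $l_2$ in terms of the shifted principal curvatures $\tilde\kappa_i:=\kappa_i-1$, which by horospherical convexity satisfy $\tilde\kappa_i\ge 0$ and by Proposition 2.2 decay exponentially to zero along the flow. Using the identity $\sigma_k(1+\tilde\kappa)=\sum_{j=0}^{k}C_{n-1-j}^{k-j}\sigma_j(\tilde\kappa)$, one checks that the constant and linear-in-$\tilde\kappa$ contributions cancel, giving
\[
l_2=\frac{(n-3)(n-4)}{3}\sigma_2(\tilde\kappa)+(n-4)\sigma_3(\tilde\kappa)+\sigma_4(\tilde\kappa).
\]
Parametrizing $\Sigma_t$ as a graph $r=u(t,\xi)$ over $\S^{n-1}$ and using the exponential growth of its area along \eqref{flow1}, the integrals $\int_{\Sigma_t}\sigma_3(\tilde\kappa)\,d\mu$ and $\int_{\Sigma_t}\sigma_4(\tilde\kappa)\,d\mu$ scale respectively as $|\Sigma_t|^{(n-7)/(n-1)}$ and $|\Sigma_t|^{(n-9)/(n-1)}$, strictly slower than the normalization $|\Sigma_t|^{(n-5)/(n-1)}$, and thus drop out of the limit defining $Q$.

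Next I would identify the surviving $\sigma_2(\tilde\kappa)$ piece with the scale-invariant $\sigma_2$-curvature functional on $\S^{n-1}$. The Gauss equation in $\H^n$,
\[
R^{\Sigma_t}_{ijkl}=h_{ik}h_{jl}-h_{il}h_{jk}-(g_{ik}g_{jl}-g_{il}g_{jk}),
\]
combined with $h=g+\tilde h$, gives $A_{g_t}=\tilde h+O(|\tilde h|^2)$ for the Schouten tensor of $g_t$, so $\sigma_2(\tilde\kappa)=\sigma_2(A_{g_t})$ to leading order. From Gerhardt's asymptotic result in Proposition 2.2, the rescaled metric $e^{-2t}g_t$ converges in $C^\infty$ to a smooth metric $\tilde g$ on $\S^{n-1}$ conformal to $g_{\S^{n-1}}$, and the scale invariance of $\mathcal F_2$ then yields
\[
\lim_{t\to\infty}Q(\Sigma_t)=\frac{(n-3)(n-4)}{3}\mathcal F_2(\tilde g),
\]
the numerical constant being pinned down by the test case of geodesic spheres, for which both sides evaluate directly to $\frac{(n-1)(n-2)(n-3)(n-4)}{24}\omega_{n-1}^{4/(n-1)}$. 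Since horospherical convexity gives $\tilde h\ge 0$ and hence $A_{\tilde g}\ge 0$ in the limit, $\tilde g$ is $1$-positive, so Proposition 2.3 with $k=2$ produces $\mathcal F_2(\tilde g)\ge\frac{(n-1)(n-2)}{8}\omega_{n-1}^{4/(n-1)}$, which combined with the previous display establishes \eqref{ineq2}.

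For the equality case, equality in Proposition 2.3 forces $\tilde g$ to be the round metric (up to a Möbius transformation of $\S^{n-1}$), while equality in Theorem \ref{thm1} requires pointwise equality in Proposition \ref{keyprop.} throughout the flow; the second alternative in \eqref{=} is not preserved under \eqref{flow1}, so $\Sigma_t$ must be totally umbilical at every time, whence $\Sigma$ is a geodesic sphere. The main obstacle will be the rigorous asymptotic analysis: justifying the $C^\infty$ convergence of the rescaled induced metric to $\tilde g$, proving that the $\sigma_3$ and $\sigma_4$ contributions are genuinely lower-order than $|\Sigma_t|^{(n-5)/(n-1)}$, and confirming that the pointwise inequality $\tilde h\ge 0$ is transferred to $A_{\tilde g}\ge 0$ through the limiting procedure.
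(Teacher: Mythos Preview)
Your plan has the same architecture as the paper's proof: monotonicity of $Q$ plus identification of the limit with the conformal $\sigma_2$-functional on $\S^{n-1}$, followed by Proposition~2.3. Your decomposition $l_2=\frac{(n-3)(n-4)}{3}\sigma_2(\tilde\kappa)+(n-4)\sigma_3(\tilde\kappa)+\sigma_4(\tilde\kappa)$ is correct and is a cleaner way to see why only the $\sigma_2(\tilde\kappa)$ piece survives; the paper instead carries out a direct coordinate expansion of $\sigma_2,\sigma_4$ in the graph function $\varphi$ and checks explicitly that $\frac{3l_2}{(n-3)(n-4)}=\sigma_2(\lambda^2\hat g)+O(e^{-6t/(n-1)})$ for the conformal metric $\lambda^2\hat g$, $\lambda=\sinh r$.

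There is one genuine gap in your proposal, and the paper's proof is built precisely to avoid it. You want to pass to a limit metric $\tilde g$ and then invoke Proposition~2.3. But horospherical convexity only yields $\tilde h\ge 0$, hence $A_{\tilde g}\ge 0$ in the limit, i.e.\ $\tilde g\in\overline{\Gamma_1^+}$; nothing rules out $\sigma_1(\tilde g)=0$ somewhere, and Proposition~2.3 as stated requires strict $1$-positivity. The paper does not take a limit first. Instead, at each finite $t$ it observes that $\sigma_1(\lambda^2\hat g)\ge -Ce^{-4t/(n-1)}$ and introduces the perturbed conformal factor $\tilde\lambda=\lambda^{1-e^{-t/(n-1)}}$, for which one computes directly that $\tilde\lambda^2\hat g\in\Gamma_1^+$. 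The Sobolev inequality \eqref{Sk} is applied to $\tilde\lambda^2\hat g$ at fixed $t$, and only afterward does one send $t\to\infty$, using $\lambda^{-e^{-t/(n-1)}}=1+o(1)$. This order of operations is the main technical idea you are missing; your ``$C^\infty$ convergence to $\tilde g$'' assumption sidesteps rather than resolves it.

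For the equality case, your argument via equality in Proposition~\ref{keyprop.} is more delicate than necessary (you would have to exclude case~(ii) of \eqref{=} pointwise along the flow). The paper takes the simpler route: constancy of $Q$ forces equality in the area estimate \eqref{area}, hence equality in the Newton--MacLaurin inequality $\frac{\sigma_1\sigma_3}{\sigma_4}\ge\frac{4(n-1)}{n-4}$, which immediately gives umbilicity.
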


\begin{proof} Let $\Sigma(t)$ be a solution of flow (\ref{flow}) obtained by the work of Gerhardt \cite{Gerhardt}.
We have showed that  this flow preserves the horospherical
convexity and non-increases  the functional $Q$ in proposition 2.2 and Theorem \ref{thm1} respectively. Hence, to show \eqref{ineq2} we only need  to show
\begin{equation}\label{eq3.18}
\lim_{t\to \infty} Q(\Sigma_t)\ge  \frac{(n-1)(n-2)(n-3)(n-4)}{24}\omega_{n-1}^{\frac{4}{n-1}}.
\end{equation}
Since $\Sigma$ is a horospherical convex hypersurface in $(\H^n,\bar g)$, it can be  written  as a graph of function $r(\theta)$, $\theta\in \mathbb{S}^{n-1}$. We denote $X(t)$ as graphs $r(t,\theta)$ on $\mathbb{S}^{n-1}$ with the standard metric $\hat g$. We set $\lambda(r)=\sinh(r)$ and we have $\lambda'(r)=\cosh(r)$. It is clear that
$$
(\lambda')^2=(\lambda)^2+1.
$$
We define $\varphi(\theta)=\Phi(r(\theta))$. Here $\Phi$ is a function satisfying
$$
\Phi'=\frac{1}{\lambda}.
$$
We need another function
$$
v=\sqrt{1+|\nabla\varphi|^2_{\hat g}}.
$$
By the result of Gerhardt  \cite{Gerhardt}, we have the following results.\\

\begin{lemm}\label{lemmGerhardt}
$$
\lambda=O(e^{\frac{t}{n-1}}),\qquad |\nabla\varphi|+|\nabla^2\varphi|=O(e^{-\frac{t}{n-1}}).
$$
\end{lemm}

The second fundamental form of $\Sigma$ is written in an orthonormal basis
$$
h^i_j=\frac{\lambda'}{v\lambda}\left(\delta^i_j-\frac{\varphi^i_j}{\lambda'}+\frac{{\varphi^i}{\varphi^l}\varphi_{jl}}{v^2\lambda'}\right).
$$
We have also
$$
\nabla \lambda=\lambda\lambda'\nabla\varphi.
$$
We recall the basic facts
$$
\s_4=\frac{1}{24}\left(s_1^4-6s_1^2s_2+8s_1s_3+3s_2^2-6s_4\right),
$$
$$
\s_2=\frac{1}{2}\left(s_1^2-s_2\right),
$$
where
$$
s_k:=\sum_{i}\kappa_{i}^k.
$$
From above, we have the following expansions.
\begin{equation*}\begin{array}{rcl}
s_1&=& \ds\vs \frac{\lambda'}{v\lambda}\left(n-1-\frac{\triangle \varphi}{\lambda'}+\frac{{\varphi^i}{\varphi^j}\varphi_{ij}}{v^2\lambda'}\right) +O(e^{-\frac{6t}{n-1}}),
\\
s_2&=& \ds \vs (\frac{\lambda'}{v\lambda})^2\left(n-1-\frac{2\triangle \varphi}{\lambda'}+\frac{\varphi^i_l\varphi^l_i}{(\lambda')^2}+\frac{2{\varphi^i}{\varphi^j}\varphi_{ij}}{v^2\lambda'}\right) +O(e^{-\frac{6t}{n-1}}),\\
s_3 &=& \ds\vs (\frac{\lambda'}{v\lambda})^3\left(n-1-\frac{3\triangle \varphi}{\lambda'}+\frac{3\varphi^i_l\varphi^l_i}{(\lambda')^2}+\frac{3{\varphi^i}{\varphi^j}\varphi_{ij}}{v^2\lambda'}\right) +O(e^{-\frac{6t}{n-1}}),
\\
s_4&=&\ds\vs (\frac{\lambda'}{v\lambda})^4\left(n-1-\frac{4\triangle \varphi}{\lambda'}+\frac{6\varphi^i_l\varphi^l_i}{(\lambda')^2}+\frac{4{\varphi^i}{\varphi^j}\varphi_{ij}}{v^2\lambda'}\right) +O(e^{-\frac{6t}{n-1}}).
\end{array}\end{equation*}
These give
$$
\s_2=\frac12(\frac{\lambda'}{v\lambda})^2\left((n-1)(n-2)-\frac{2(n-2)\triangle \varphi}{\lambda'}-\frac{\varphi^i_l\varphi^l_i}{(\lambda')^2}+ (\frac{\triangle \varphi}{\lambda'})^2+\frac{2(n-2){\varphi^i}{\varphi^j}\varphi_{ij}}{v^2\lambda'}\right) +O(e^{-\frac{6t}{n-1}}),
$$
and
$$
\begin{array}{lll}
\s_4=&\ds\frac{1}{24}(\frac{\lambda'}{v\lambda})^4(n-3)(n-4)&\ds\left((n-1)(n-2)-\frac{4(n-2)\triangle \varphi}{\lambda'}-\frac{6\varphi^i_l\varphi^l_i}{(\lambda')^2}\right.\\
&&\ds\left.
+6(\frac{\triangle \varphi}{\lambda'})^2+\frac{4(n-2){\varphi^i}{\varphi^j}\varphi_{ij}}{v^2\lambda'}\right) +O(e^{-\frac{6t}{n-1}}),
\end{array}
$$
which imply
\begin{eqnarray*}
l_2&=&\ds \s_4-\frac{(n-3)(n-4)}{6}\s_2+\frac{(n-1)(n-2)(n-3)(n-4)}{24}\\
&=&\ds \frac{(n-1)(n-2)(n-3)(n-4)}{24}\bigg[(\frac{\lambda'}{v\lambda})^2-1\bigg]^2\\
&&\ds- \frac{(n-2)(n-3)(n-4)}{6}\frac{\triangle \varphi}{\lambda'}\bigg(\frac{\lambda'}{v\lambda}\bigg)^2\bigg[(\frac{\lambda'}{v\lambda})^2-1\bigg]\\
&&\ds+\frac{(n-2)(n-3)(n-4)}{6}\bigg(\frac{\lambda'}{v\lambda}\bigg)^2\bigg[(\frac{\lambda'}{v\lambda})^2-1\bigg]\frac{{\varphi^i}{\varphi^j}\varphi_{ij}}{v^2\lambda'}\\
&&\ds+\frac{(n-3)(n-4)}{4}\left(\frac{\triangle \varphi}{\lambda'}\right)^2\bigg(\frac{\lambda'}{v\lambda}\bigg)^2\bigg[(\frac{\lambda'}{v\lambda})^2-\frac13\bigg]\\
&&\ds- \frac{(n-3)(n-4)}{4} \frac{\varphi^i_l\varphi^l_i}{(\lambda')^2}  \bigg(\frac{\lambda'}{v\lambda}\bigg)^2\bigg[(\frac{\lambda'}{v\lambda})^2-\frac13\bigg] +O(e^{-\frac{6t}{n-1}}).
\end{eqnarray*}
From Lemma \ref{lemmGerhardt} and expressions  of $v,\lambda,\lambda'$, we get
$$
(\frac{\lambda'}{v\lambda})^2-1=\frac{1}{\lambda^2}-|\nabla \varphi|^2+O(e^{-\frac{4t}{n-1}}),
$$
which implies
$$
\begin{array}{lll}
\ds \frac{6l_2}{(n-3)(n-4)}&=&\ds  \frac{(n-1)(n-2)}{4}(\frac{1}{\lambda^2}-|\nabla \varphi|^2)^2-(n-2)\frac{\triangle \varphi}{\lambda}(\frac{1}{\lambda^2}-|\nabla \varphi|^2).\\
&&\ds+\left(\frac{\triangle \varphi}{\lambda}\right)^2-\frac{\varphi^i_l\varphi^l_i}{(\lambda')^2}  +O(e^{-\frac{6t}{n-1}}).
\end{array}
$$
We now define a $2$-tensor
$$
A=-\lambda \nabla^2\varphi -\frac12(\lambda^2 |\nabla\varphi|^2-1)\hat g.
$$
One can check that
$$
 \frac{3l_2}{(n-3)(n-4)}=\lambda^{-4}\s_2({\hat g}^{-1}A)+O(e^{-\frac{6t}{n-1}}).
$$
Recall $\varphi (\theta)=\Phi(r(\theta))$. It is easy to check that
$\lambda_i=\lambda' r_i$. It follows that
%$\varphi_i
$$\varphi_i=\lambda_i/\lambda\lambda' \quad \hbox{ and }
\quad
\varphi_{ij}=\frac{\lambda_{ij}}{\lambda^2}-\frac{2\lambda_i\lambda_j}{\lambda^3}+O(e^{-\frac{3t}{n-1}}).
$$
Hence we have
\begin{equation}\label{b1}
 \frac{3l_2}{(n-3)(n-4)}=\lambda^{-4}\s_2({\hat g}^{-1}(-\frac{\nabla^2\lambda}{\lambda}+\frac{2\nabla\lambda\otimes\nabla\lambda }{\lambda^2} -\frac12(\frac{ |\nabla\lambda|^2}{\lambda^2}-1)\hat g))+O(e^{-\frac{6t}{n-1}}).
\end{equation}
Recall the definition of the Schouten tensor
$$
S_{\hat g}=\frac{1}{n-3}\left(Ric_{\hat g}-\frac{R_{\hat g}}{2(n-2)}{\hat g}\right)=\frac12 \hat g.
$$
Its conformal transformation formula is well-known (see for example \cite{Via})
\begin{equation}\label{b2}
S_{\lambda^2\hat g}=-\frac{\nabla^2\lambda}{\lambda}+\frac{2\nabla\lambda\otimes\nabla\lambda }{\lambda^2} -\frac12\frac{ |\nabla\lambda|^2}{\lambda^2}\hat g+S_{\hat g} =-\frac{\nabla^2\lambda}{\lambda}+\frac{2\nabla\lambda\otimes\nabla\lambda }{\lambda^2} -\frac12\frac{ |\nabla\lambda|^2}{\lambda^2}\hat g+\frac 12 \hat g.
\end{equation}
From \eqref{b1} and \eqref{b2}, we obtain
$$
 \frac{3l_2}{(n-3)(n-4)}=\s_2({\lambda^2\hat g})+O(e^{-\frac{6t}{n-1}}).
$$
Recall that the metric on $\Sigma(t)$ has the following expansion
$$
g=\lambda^2(\hat g+\nabla\varphi\otimes\nabla\varphi)=\lambda^2 \hat g+O(1).
$$
It follows
$$
\sqrt{\det(g)}=\lambda^{n-1}(1+O(e^{-\frac{2t}{n-1}})),
$$
which gives
$$
|\Sigma(t)|=(1+o(1))\int_{\mathbb{S}^{n-1}} \lambda^{n-1}=(1+o(1))vol(\lambda^2\hat g).
$$
Similarly, we have
\begin{equation}\label{eq3.1}
\int  \frac{3l_2}{(n-3)(n-4)}=\int_{\mathbb{S}^{n-1}}\s_2({\lambda^2\hat g}) dvol_{\lambda^2\hat g}+ O(e^{\frac{(n-7)t}{n-1}})=(1+o(1))\int_{\mathbb{S}^{n-1}}\s_2({\lambda^2\hat g}) dvol_{\lambda^2\hat g}.
\end{equation}
Here we have used the fact $\int_{\mathbb{S}^{n-1}}\s_2({\lambda^2\hat g}) dvol_{\lambda^2\hat g}= O(e^{\frac{(n-5)t}{n-1}})$ (in fact, we have $\int_{\mathbb{S}^{n-1}}\s_2({\lambda^2\hat g}) dvol_{\lambda^2\hat g}\ge c|\Sigma(t)|^{\frac{n-5}{n-1}}\ge c_1 e^{\frac{(n-5)t}{n-1}}$ for some $c,c_1>0$ see the proof below). We try to use the generalized Sobolev inequality for the conformal metric $\lambda^2\hat g$, which is presented in Proposition 2.3.  For this purpose, we need to show that $ \lambda^2\hat g\in \Gamma_1^+$.
Our observation is that it is asymptotically true. More precisely, we have the following asymptotic property
$$
\begin{array}{llll}
\sigma_1&=&\ds n-1+\frac{n-1}{2}(\frac{1}{\lambda^2}-|\nabla \varphi|^2)-\frac{\triangle \varphi}{\lambda} +O(e^{-\frac{4t}{n-1}})\\
&=&\ds n-1+\frac{1}{\lambda^2} (\frac{n-1}{2}-\frac{n-5}{2}\frac{|\nabla \lambda|^2}{\lambda^2}-\frac{\triangle \lambda}{\lambda} )+O(e^{-\frac{4t}{n-1}})\\
&=&\ds n-1+\s_1({\lambda^2\hat g})+O(e^{-\frac{4t}{n-1}}).
\end{array}
$$
Recall $\Sigma(t)$ is a  horospherical convex hypersurface. As a consequence, $\sigma_1 \ge n-1$ so that
$$\s_1({\lambda^2\hat g})+O(e^{-\frac{4t}{n-1}})\ge 0.$$
We consider $\tilde \lambda:=\lambda^{1-e^{-\frac{t}{n-1}}}$ and the conformal metric $\tilde \lambda^2\hat g$. We have
$$
\tilde \lambda^2\s_1({\tilde \lambda^2\hat g})=\frac{n-1}{2}e^{-\frac{t}{n-1}}+\frac{n-3}{2}e^{-\frac{t}{n-1}}(1-e^{-\frac{t}{n-1}})\frac{|\nabla  \lambda|^2}{ \lambda^2}+(1-e^{-\frac{t}{n-1}})\lambda^2\s_1({\lambda^2\hat g}).
$$
This yields $\tilde \lambda^2\hat g\in \Gamma_1^+$. From  the Sobolev inequality (\ref{Sk}) for the $\s_2$ operator, we have
\begin{equation}\label{eq3.2}
(vol(\tilde \lambda^2\hat g))^{-\frac{n-5}{n-1}}\int_{\mathbb{S}^{n-1}}\s_2({\tilde \lambda^2\hat g}) dvol_{\tilde \lambda^2\hat g}\ge \frac{(n-1)(n-2)}{8}\omega_{n-1}^{\frac{4}{n-1}}.
\end{equation}
On the other hand, we have
\begin{equation}\label{eq3.3}
(vol(\tilde \lambda^2\hat g))^{-\frac{n-5}{n-1}}\int_{\mathbb{S}^{n-1}}\s_2({\tilde \lambda^2\hat g}) dvol_{\tilde \lambda^2\hat g}=(1+o(1))(vol( \lambda^2\hat g))^{-\frac{n-5}{n-1}}\int_{\mathbb{S}^{n-1}}\s_2({ \lambda^2\hat g}) dvol_{ \lambda^2\hat g},
\end{equation}
since
$$
\lambda^{-e^{-\frac{t}{n-1}}}=1+o(1).
$$
As a consequence of (\ref{eq3.1}), (\ref{eq3.2}) and (\ref{eq3.3}), we deduce
$$
\lim_{t\to +\infty} (vol(\Sigma(t)))^{-\frac{n-5}{n-1}}\int_{\Sigma(t)} l_2\ge \frac{(n-1)(n-2)(n-3)(n-4)}{24}\omega_{n-1}^{\frac{4}{n-1}}.
$$
This proves (\ref{eq3.18}), and hence \eqref{ineq2}. When  \eqref{ineq2} is an equality, then $Q$ is constant along the flow. In this case \eqref{area} is an equality, which implies that equality
 in  the inequality
 \[\frac {n-4}4\frac{\sigma_1\sigma_3}{\sigma_4}\ge n-1,\]
 holds. Therefore,
$\Sigma$ is a geodesic sphere.
\end{proof}
Theorem \ref{thm2} has its own interest. It is in fact a Sobolev type inequality. See similar Sobolev type inequalities in \cite{GuanWang}. Now we can finish the proof of our main result.

\

\noindent{\it Proof of Theorem 1.1.} First of all, it is easy to check that for geodesic spheres all inequalities considered in this paper are equalities.

In view of (\ref{func}) and (\ref{ineq2}), we have when $n>5$,
\begin{equation}
\label{equationfinal}
\int_{\Sigma} l_2\ge \frac{(n-1)(n-2)(n-3)(n-4)}{24}\omega_{n-1}^{\frac{4}{n-1}}(|\Sigma|)^{\frac{n-5}{n-1}}.
\end{equation}
Since we can write $\sigma_4=l_2+\frac{(n-2)(n-3)}{6}\sigma_2-\frac{(n-1)(n-2)(n-3)(n-4)}{24}$, combining above with (\ref{eq05}) we get %the desired result
\begin{eqnarray}\label{eq_AF2}
\ds \int_\Sigma \s_4 &\ge&\ds\vs  C_{n-1}^4\omega_{n-1}^{\frac{4}{n-1}}(|\Sigma|)^{\frac{n-5}{n-1}} +
\int\frac{(n-2)(n-3)}{6}\sigma_2-\frac{(n-1)(n-2)(n-3)(n-4)}{24}\\
&\ge & \ds\vs C_{n-1}^4\omega_{n-1}\left\{ \left( \frac{|\Sigma|}{\omega_{n-1}} \right)^\frac 12 +
\left( \frac{|\Sigma|}{\omega_{n-1}} \right)^{\frac 12\frac {n-5}{n-1}} \right\}^2.\label{eq_AF2_2}
\end{eqnarray}
This is inequality \eqref{AF2}. By Theorem \ref{thm2}, equality holds if and only if $\Sigma$ is a geodesic sphere,.

When $n=5$, the Euler characteristic is equal to $1$ since the hypersurface $\Sigma$ is star-shaped.
By Remark \ref{Remark1}, we know that \eqref{equationfinal} is an equality when $n=5$, even for any hypersurface diffeomorhpic to a sphere. Hence  in this case, we also have  the above inequalities
with equality in \eqref{eq_AF2}, and hence \eqref{AF2}. Equality in \eqref{AF2} implies equality in \eqref{eq_AF2_2}, which, in turn, implies by \cite{LWX} that $\Sigma$
is a geodesic sphere.
\qed

\vspace{5mm}

\noindent{\it Acknowledgment.} We would like to thank Pengfei Guan for his helpful discussions and constant support and
 Wei Wang for his elegant  proof of Proposition \ref{keyprop.}.


\begin{thebibliography}{99}

\bibitem{AF1} A.D. Alexandrov, {\it Zur Theorie der gemischten Volumina von konvexen K\"orpern, II. Neue Ungleichungen zwischen den gemischten Volumina und ihre Anwendungen}, Mat. Sb. (N.S.) 2 (1937) 1205--1238 (in Russian).
\bibitem{AF2} A.D. Alexandrov, {\it Zur Theorie der gemischten Volumina von konvexen K\"orpern, III. Die Erweiterung zweeier Lehrsatze Minkowskis \"uber die konvexen Polyeder auf beliebige konvexe Flachen}, Mat. Sb. (N.S.) 3 (1938) 27--46 (in Russian).

\bibitem{An} B. Andrews, {\em Pinching estimates and motion of hypersurfaces by curvature functions,} J. reine angew. Math. \textbf{608} (2007) 17--31.
\bibitem{Beckner} W. Beckner, {\it Sharp Sobolev inequalities on the sphere and the Moser-Trudinger inequality,}
Ann. of Math. \textbf{138}(1993), 213--242.

\bibitem{BM} A. A. Borisenko and V. Miquel, {\it Total curvatures of convex hypersurfaces in hyperbolic
space}, Illinois J. Math., 43(1):61--78, (1999).

\bibitem{B} S. Brendle, {\it Constant mean curvature surfaces in warped product manifolds,} \textbf{arXiv:1105.4273}, to appear in  Publ. Math. IHES.

 \bibitem{BE} S. Brendle, M.   Eichmair, {\it Isoperimetric and Weingarten surfaces in the Schwarzschild manifold,} {\bf arXiv:1208.3988}, to appear in JDG.

\bibitem{BHW} S. Brendle, P.-K. Hung, and M.-T. Wang, {\it A Minkowski-type inequality for hypersurfaces in the Anti-deSitter-Schwarzschild manifold},\textbf{ arXiv: 1209.0669}.

\bibitem{BuragoZalgaller}
Y.D. Burago and V.A. Zalgaller, {\it Geometric Inequalities}, Springer, Berlin, (1988).

\bibitem{CM} Esther Cabezas-Rivas, Vicente Miquel, {\it Volume preserving mean curvature flow in the hyperbolic space}, Indiana Univ. Math. J. 56 No. 5 (2007), 2061--2086.


\bibitem{ChangWang} S.-Y. A. Chang and Y.Wang, {\it On Aleksandrov-Fenchel inequalities for k-convex domains,}  Milan J.
Math., {\bf 79} (2011), no. 1, 13--38.

\bibitem{ChangYang}
S.-Y. A. Chang and P. C. Yang, {\it The inequality of Moser and Trudinger and applications to conformal geometry}, Comm. Pure Appl. Math. 56 (2003), 1135--1150.

\bibitem{BN} B. Chow, P. Lu, L. Ni,
{\it  Hamilton's Ricci flow,}  Graduate Studies in Mathematics, 77. American Mathematical Society, Providence, RI; Science Press, New York, (2006).



\bibitem{DGS}  M. Dahl, R. Gicquaud, A. Sakovich, {\it Penrose type inequalities for asymptotically hyperbolic graphs,} \textbf{arXiv:1201.3321}.

\bibitem{deLG} L.L. de lima and F. Gir\~{a}o, {\it An Alexandrov-Fenchel-type inequality in hyperbolic space with an application to a Penrose inequality}, \textbf{arXiv:1209.0438v2}.

\bibitem{Ding} Q. Ding, {\it The inverse mean curvature flow in rotationally symmetric spaces}, Chinese
Annals of Mathematics, Series B, 1--18 (2010).

\bibitem{Fillastre} F. Fillastre, {\it Fuchsian convex bodies: basics of Brunn--Minkowski theory},  \textbf{arXiv:1112.5353}, (2011).



\bibitem{GS} E. Gallego and G. Solanes, {\it Integral geometry and geometric inequalities in hyperbolic
space}, Differential Geom. Appl. \textbf{22}(2005), 315--325.

\bibitem{GeWang1} Y. Ge and  G. Wang, {\it On a conformal quotient equation. II,}
communications in analysis and geometry 21 (2012), 138.

\bibitem{GWW} Y. Ge, G. Wang and J. Wu, {\it A new mass for asymptotically flat manifolds,} \textbf{arXiv:1211.3645}.

\bibitem{GWW2} Y. Ge, G. Wang and J. Wu, {\it The GBC mass for asymptotic hyperbolic manifolds,} in preparation.

\bibitem{GWW_AF_k} Y. Ge, G. Wang, J. Wu, {\it Hyperbolic Alexandrov-Fenchel quermassintegral inequalities II}, in preparation.

\bibitem{Gerhardt} C. Gerhardt, {\it Inverse curvature flows in hyperbolic space}, J. Differential Geom. \textbf{89} (2011), no. 3, 487--527.
\bibitem{Guan} P. Guan, {\it Topics in Geometric Fully Nonlinear Equations,} Lecture Notes, http://www.math.mcgill.ca/guan/notes.html.

\bibitem{GuanLi} P. Guan and J. Li, {\it The quermassintegral inequalities for k-convex starshaped domains,} Adv. Math. \textbf{221}(2009), 1725--1732.

\bibitem{GuanLinWang} P. Guan, C.S. Lin and G. Wang,  {\it TApplication of The Method of Moving Planes to Conformally Invariant Equations},
Mathematische Zeitschrift  247 (2004), 1-19.

\bibitem{GuanMTZ} P. Guan, X. Ma, N. Trudinger, X.Zhu, {\it A form of Alexandrov-Fenchel inequality},
Pure Appl. Math. Q. 6 (2010), no. 4, Special Issue: In honor of Joseph J. Kohn. Part 2, 999--1012.

\bibitem{GuanWang0} P. Guan and G. Wang, {\it A fully nonlinear conformal flow on locally conformally flat manifolds}, J. Reine Angew. Math. 557 (2003), 219--238.


\bibitem{GuanWang}  P. Guan and G. Wang, {\it Geometric inequalities on locally conformally flat manifolds}, Duke Math. J. 124 (2004), 177--212.

\bibitem{HLP} G.H. Hardy, J.E. Littlewood, G. Polya, {\it Inequalities}, Cambridge Univ. Press, Cambridge, (1934).

\bibitem{Huisken} G.Huisken, in preparation. See also \cite{GuanLi}.

\bibitem{KM} K.K. Kwong, P. Miao, {\it A New Monotone Quantity along the Inverse Mean Curvature Flow in $\mathbb R^n$,}
\textbf{arXiv:1212.1906}.
\bibitem{LiHaizhong} H. Li, a private communication.

\bibitem{LWX} H. Li, Y. Wei and C. Xiong, {\it A geometric ineqality on hypersurface in hyperbolic space},{\bf ArXiv:1211.4109}.
\bibitem{Makowski}  M. Makowski, {\it Mixed volume preserving curvature flows in hyperbolic space}, \textbf{arXiv:1208.1898}.
\bibitem{Neves} A. Neves, {\it Insufficient convergence of inverse mean curvature flow on asymptotically hyperbolic manifolds}, J. Diff. Geom. \textbf{84}(2010), 191--229.

\bibitem{Reilly}  R. Reilly, {\it On the Hessian of a function and the curvatures of its graph}, Michigan Math. J. 20 (1973) 373--383.

\bibitem{Schlenker} I. Rivin and  Jean-Marc Schlenker, {\it  On the Schlafli differential formula,} \textbf{ arXiv:math/0001176}.

\bibitem{Santos} L. Santal\'{o}, {\it Integral geometry and geometric probabolity}, Cambridge Mathematical Library. Cambridge University Press, Cambridge, (2004).

\bibitem{Schmidt} E. Schmidt, {\it Die isoperimetrischen Ungleichungen auf der gew\"ohnlichen Kugel und f\"ur Rotationsk\"orper im n-dimensionalen sph\"arischen Raum. } (German)  Math. Z.  46,  (1940), 743--794.
\bibitem{Schneider} R. Schneider, {\it Convex bodies: The Brunn-Minkowski theory}, Cambridge University, (1993), MR1216521.

\bibitem{Solanes_Thesis} G. Solanes, {\it Integrals de curvatura i geometria integral a l'espai hiperbolic}, Univ. Aut. Barcelona, PhD Thesis, (2003).


\bibitem{Via} J. Viaclovsky, {\em  Conformal geometry, contact geometry, and the calculus of variations,} Duke Math. J.  \textbf{101}  (2000),  no. 2, 283--316.


\end{thebibliography}
\end{document}